\numberwithin{equation}{section}
\theoremstyle{plain}
\newtheorem{thm}{Theorem}[section]
\newtheorem{lemma}{Lemma}[section]
\theoremstyle{definition}
\newtheorem{defn}{Definition}[section]
\theoremstyle{remark}
\newtheorem{remark}{Remark}[section]
\def\Re{\mathop{\rm Re}\nolimits}
\def\Im{\mathop{\rm Im}\nolimits}
\newcommand{\dist}{\text{\rm{dist}}}
\newcommand{\loc}{\text{\rm{loc}}}
\newcommand{\ac}{\text{\rm{ac}}}
\newcommand{\singc}{\text{\rm{sc}}}
\newcommand{\pp}{\text{\rm{pp}}}
\newcommand{\AC}{\text{\rm{AC}}}
\DeclareMathOperator*{\wlim}{w-lim}
\newcommand{\rel}[1]{\sim_{#1}}
\title[Wigner--von Neumann type potentials]{Schr\"odinger operators with slowly decaying Wigner--von Neumann type potentials}
\author{Milivoje Lukic}
\date\today
\email{milivoje.lukic@rice.edu}
\keywords{Schrodinger operator, bounded variation, Wigner--von Neumann potential}
\subjclass[2010]{34L40,35J10}
\begin{document}

\begin{abstract}
We consider Schr\"odinger operators with potentials satisfying a generalized bounded variation condition at infinity and an $L^p$ decay condition. This class of potentials includes slowly decaying Wigner--von Neumann type potentials $\sin(ax)/x^b$ with $b>0$. We prove absence of singular continuous spectrum and show that embedded eigenvalues in the continuous spectrum can only take values from an explicit finite set. Conversely, we construct examples where such embedded eigenvalues are present, with exact asymptotics for the corresponding eigensolutions.
\end{abstract}

\maketitle
\section{Introduction}

In this paper, we will investigate a class of Schr\"odinger operators with decaying oscillatory potentials. The flagship example is the Wigner--von Neumann potential~\cite{WignerVonNeumann29} (see also \cite[Section XIII.13]{ReedSimon4}) on $(0,+\infty)$, which has asymptotic behavior
\begin{equation}
V(x) = - 8 \frac{\sin(2 x)}x + O(x^{-2}),\quad x\to\infty
\end{equation}
and the peculiar property that the Schr\"odinger operator $H=-\Delta+V$ has an eigenvalue at $+1$ embedded in the a.c.\ spectrum $[0,+\infty)$. In honor of this example, potentials of the form
\begin{equation}\label{1.2}
V(x) = \sum_{k=1}^K \lambda_k \frac{\cos(\alpha_k x +\xi_k)}{ x^{\gamma_k}}  + W(x), \quad \gamma_k>0, \quad W(x)\in L^1
\end{equation}
are often called Wigner--von Neumann type potentials. They have been the subject of much research, mostly restricted to $\gamma_k> \frac 12$; see Atkinson~\cite{Atkinson54}, Harris--Lutz~\cite{HarrisLutz75}, Reed--Simon~\cite[Thm XI.67]{ReedSimon3} and Ben-Artzi--Devinatz~\cite{Ben-ArtziDevinatz79}. It is proved there that for $\gamma_k>\frac 12$, $H$ has purely absolutely continuous spectrum on
 \[
 (0,\infty) \setminus \Bigl\{ \frac{ \alpha_k^2}{4} \Bigm\vert 1 \le k \le K\Bigr\}
 \]
 and that $E=\frac{\alpha_k^2}4$ can be in the pure point spectrum of $H$. Simon~\cite{Simon97}, following work by Naboko~\cite{Naboko86}, has even used Wigner--von Neumann potentials to construct decaying potentials with arbitrary positive pure point spectrum, including dense point spectrum.
 
 On a different note, Weidmann's theorem~\cite{Weidmann67} states that for $V=V_1+V_2$, where $V_1$ has bounded variation, $\lim\limits_{x\to\infty} V_1(x)=0$ and $V_2\in L^1(0,\infty)$, the Schr\"odinger operator $H=-\Delta+V$ has purely a.c.\ spectrum on $(0,+\infty)$. Inspired by those two results, one defines functions of generalized bounded variation. This class of functions includes functions of bounded variation and Wigner--von Neumann type potentials~\eqref{1.2} and is the natural class of potentials for the result that follows.

\begin{defn}\label{D1.1} A function $\beta\colon (0,+\infty)\to \mathbb{C}$ has \emph{rotated bounded variation} with phase $\phi$ if $e^{i \phi x} \beta(x)$ has bounded variation. A function $V\colon (0,+\infty)\to \mathbb{C}$ has \emph{generalized bounded variation} with the set of phases $A=\{\phi_1,\dots,\phi_L\}$ if it can be expressed as a sum
\begin{equation}\label{1.3}
V(x) = \sum_{l=1}^L \beta_l(x) + W(x)
\end{equation}
such that the $l$-th function $\beta_l$ has rotated bounded variation with phase $\phi_l$ and $W(x) \in L^1(0,+\infty)$.
\end{defn}

It is clear that a potential of the form \eqref{1.2} has generalized bounded variation with the set of phases $\{\pm \alpha_1, \dots, \pm \alpha_K\}$, since $\frac{\lambda_k}{x^{\gamma_k}} e^{\pm i[\alpha_k x + \xi_k]}$ has rotated bounded variation with phase $\mp \alpha_k$.

A real-valued function $V$ of generalized bounded variation obeys $V \in L^1_\loc(0,\infty)$, $V\in L^1(0,1)$ and
\begin{equation}
\lim_{n\to\infty} \int_n^{n+1} \lvert V(x) \rvert dx =0.
\end{equation}
Thus, $0$ is a regular point for $-\Delta+V$ and $-\Delta+V$ is limit point at $+\infty$. Therefore, by the general theory of one-dimensional Schr\"odinger operators (as described in \cite{ReedSimon2} or \cite{Teschl09}), the expression $-\Delta+V$ defines Schr\"odinger operators $H_\theta$ on $L^2(0,\infty)$, parametrized by $\theta\in [0,\pi)$. The domain of $H_\theta$ is
\begin{equation}\label{1.5}
D(H_\theta) = \{ u \in L^2(0,\infty) \mid  u, u' \in \AC_\loc,  -u''+ Vu \in L^2, u'(0) \sin \theta = u(0) \cos \theta  \}
\end{equation}
and $H_\theta: D(H_\theta) \to L^2(0,\infty)$ acts as
\begin{equation}\label{1.6}
(H_\theta u)(x)  = -u''(x) + V(x) u(x)
\end{equation}
The operator $H_\theta$ is self-adjoint, and for every $z \in \mathbb{C}$ with $\Im z > 0$, there is a solution of $- u''_z + V u_z = z u_z$ which is square-integrable near $\infty$. This is used to define the $m$-function
\[
m_\theta(z) =  \frac{u'_z(0) \cos\theta + u_z(0) \sin \theta }{u_z(0) \cos\theta - u'_z(0) \sin\theta}
\]
which, in turn, defines a canonical spectral measure $\mu_\theta$ by
\[
d \mu_\theta =\tfrac 1\pi \wlim_{\epsilon \downarrow 0}  m_\theta(x+i\epsilon) dx
\]
(the weak limit is with respect to continuous functions of compact support).
The importance of $\mu_\theta$ lies in the fact that the operator $H_\theta$ is unitarily equivalent to multiplication by $x$ on $L^2(\mathbb{R},d\mu_\theta(x))$.

Our first theorem describes the spectrum of operators with potentials of generalized bounded variation, with an $L^p$ condition on the decay. This is the analog of our results for orthogonal polynomials on the real line and on the unit circle (Theorems 1.1 and 1.2 from \cite{Lukic1}), and the proof will use ideas from \cite{Lukic1}. There is also closely related recent work for orthogonal polynomials and discrete Schr\"odinger operators by Wong~\cite{Wong09} and Janas--Simonov~\cite{JanasSimonov10}. For more on the history of this problem for those systems, see~\cite{Lukic1}.

\begin{thm}
\label{T1.1} Let $H_\theta$ be the Schr\"odinger operator given by \eqref{1.5} and \eqref{1.6}, where $V\colon (0,\infty)\to \mathbb{R}$ has generalized bounded variation with the set of phases $A$, and $V \in L^1+L^p$ for some positive integer $p$.
Then there is a finite set which depends only on $A$ and $p$,
\begin{equation}\label{1.7}
S_p =  \Bigl\{ \frac{\eta^2}4 \Bigm\vert \eta \in \bigcup_{k=1}^{p-1} (\underbrace{A+\dots+A}_{k\text{ times}}) \Bigr\},
\end{equation}
such that on $(0,\infty)\setminus S_p$, the spectral measure $\mu_\theta$ of $H_\theta$ is mutually absolutely continuous with Lebesgue measure. Thus,
\begin{enumerate}[(i)]
\item $\sigma_\ac(H_\theta) = [0,\infty)$;
\item $\sigma_\singc (H_\theta) = \emptyset$;
\item $\sigma_\pp (H_\theta) \cap (0,\infty) \subset S_p$ is a finite set.
\end{enumerate}
\end{thm}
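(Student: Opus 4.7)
The plan is to analyze solutions of $-u''+Vu=Eu$ via modified Prüfer variables, show that for each $E\in(0,\infty)\setminus S_p$ the amplitude $R(x)$ stays bounded above and away from zero uniformly as $E$ ranges over compact subsets, and then invoke the Gilbert--Pearson / Jitomirskaya--Last subordinacy theory to conclude that $\mu_\theta$ is mutually absolutely continuous with Lebesgue measure on $(0,\infty)\setminus S_p$. Once such uniform two-sided bounds on $R$ are established, no subordinate solution exists at any $E$ in such a compact set, so the Jitomirskaya--Last formula gives $d\mu_\theta/dE$ bounded above and away from zero there, and (i)--(iii) follow at once.

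\textbf{Prüfer setup.} Writing $E=k^2$ with $k>0$ and parametrizing a real solution as $u=R\sin(kx+\psi)$, $u'/k=R\cos(kx+\psi)$, the usual computation yields
\begin{equation*}
(\log R^2)'(x) = \frac{V(x)}{k}\sin\Phi(x),\qquad \psi'(x) = -\frac{V(x)}{k}\sin^2\frac{\Phi(x)}{2},
\end{equation*}
where $\Phi(x):=2(kx+\psi(x))$. The required uniform two-sided bounds on $R$ then reduce to uniform control of $\sup_x\bigl|\int_a^x V(t)\sin\Phi(t)\,dt\bigr|$.

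\textbf{Iterated integration by parts.} Using Definition~\ref{D1.1}, decompose $V=\sum_l\beta_l+W$ with $\beta_l=\gamma_l e^{-i\phi_l x}$ and $\gamma_l$ of bounded variation; the $L^1$ piece $W$ contributes a bounded error. For each $\beta_l$, expand $\sin\Phi$ into $e^{\pm i\Phi}$ to obtain an oscillatory integral of the form $\int \gamma_l(t)\,e^{i(\pm 2k-\phi_l)t}\,e^{\pm 2i\psi(t)}\,dt$. Since $E\notin S_p$ excludes $E=\phi_l^2/4$, the linear phase $\pm 2k-\phi_l$ is nonzero, and integration by parts against the pure exponential produces a bounded boundary term, an integral against $d\gamma_l$ (absolutely convergent by the BV hypothesis), and a residual integrand carrying a fresh factor of $\psi'$, hence of $V$. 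Re-expanding this new $V$ via $V=\sum_m\beta_m+W$ yields daughter integrals whose cumulative linear phase is $\pm 2k-\phi_l-\phi_m$ (with the multiplier of $\psi$ shifted by $\pm 2$ from expanding $\psi'$); the two-phase component of $S_p$ guarantees this phase is again nonzero, permitting a further integration by parts. Iterating $p-1$ times, every surviving integrand carries a linear phase of the form $\pm 2k - (\phi_{l_1}+\cdots+\phi_{l_j})$ with $j\le p-1$, and its integrand is pointwise bounded by a product of at most $p-1$ bounded BV factors with $V$ (or with repeated $V$-factors arising from successive $\psi'$ substitutions); the hypothesis $V\in L^1+L^p$ then produces $L^1$ control of these terminal integrals.

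\textbf{Main obstacle.} The principal technical difficulty is the combinatorial bookkeeping. Each round of integration by parts spawns a tree of daughter integrals, indexed both by which $\beta_m$ is used to replace the fresh $V$ and by how $\psi'$ is expanded via the trigonometric identity $\sin^2(\Phi/2)=\frac12(1-\cos\Phi)$, so that by round $j$ one is tracking a tree of size $O((L+1)^j)$. One must verify that every surviving branch carries an effective linear phase that is a signed sum of $\pm 2k$ with at most $j$ phases from $A$, vanishing precisely on $S_p$; that the non-resonance bounds $|\pm 2k-\sum\phi|^{-1}$ are uniform on compact subsets of $(0,\infty)\setminus S_p$; and that at the terminal level the combination of the accumulated bounded BV factors with the residual $V$-factor(s) is absolutely integrable by virtue of the $L^1+L^p$ hypothesis, which is where the choice of $p-1$ iterations is dictated. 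This uniformity is what upgrades the conclusion from mere absence of singular spectrum to mutual absolute continuity with Lebesgue on all of $(0,\infty)\setminus S_p$, and it is the only place where $p$ enters quantitatively.
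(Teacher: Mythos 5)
Your overall strategy — modified Prüfer variables, iterated integration by parts against the oscillatory phase, terminating after $p-1$ steps using the $L^p$ hypothesis, and then subordinacy theory — is exactly the approach of the paper, so the architecture is sound. However, there is a genuine gap in the iteration scheme that the paper has to work substantially to close, and your proposal does not recognize it.

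When you substitute $\psi'=-\tfrac{V}{k}\sin^2(\Phi/2)=-\tfrac{V}{2k}\bigl(1-\tfrac12 e^{i\Phi}-\tfrac12 e^{-i\Phi}\bigr)$, the constant term shifts the multiplier of $\Phi$ by $0$, not only by $\pm1$; so after a few rounds you inevitably produce branches whose integrand is of the form $f(\eta)\,\beta_{j_1}\cdots\beta_{j_I}(x)$ with \emph{no} remaining factor of $e^{iK\Phi}$, i.e.\ $K=0$. The effective linear phase of such a branch is $-(\phi_{j_1}+\cdots+\phi_{j_I})$, with no $\pm 2k$ contribution. Your claim that ``the two-phase component of $S_p$ guarantees this phase is again nonzero'' is simply false for these branches: since $A=-A$, you will always hit branches with $\phi_{j_1}+\cdots+\phi_{j_I}=0$, and this is completely independent of whether $E$ is in $S_p$. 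Integration by parts cannot be applied to these terms at all, and no non-resonance hypothesis on $E$ can save you. The paper deals with this by a separate argument: it proves (Lemma~5.1(ii)) a reflection symmetry $F_{I,0}(\eta;\phi_1,\dots,\phi_I)=F_{I,0}(\eta;-\phi_1,\dots,-\phi_I)$ whenever $\sum\phi_i=0$, which, paired with the fact that the $\beta_l$ come in conjugate pairs in the decomposition of the real potential $V$, forces these stuck terms to contribute a purely real quantity. Since $(\log R)'$ is the imaginary part of the combined Prüfer derivative, they drop out. Without this (or an equivalent cancellation), your iteration stalls.

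A secondary, smaller gap: your claim that the terminal integrand is ``$L^1$ by $V\in L^1+L^p$'' does not quite follow from what you have. After $p-1$ rounds the surviving integrand is a product of $p$ rotated-BV factors $\beta_{j_1}\cdots\beta_{j_p}$; these are bounded, but a product of bounded functions need not be integrable. You need to promote each $\beta_l$ from $L^\infty\cap(L^1+L^p)$ to $L^p$ before Hölder gives integrability of the $p$-fold product; the paper proves this separately (Lemmas~2.3--2.4) using a linear-combination-of-translates trick to isolate a single $\beta_l$ from the sum $\sum_l\beta_l\in L^1+L^p$. Finally, you should also note that to perform the integration by parts you need $\beta_l\in C^1$ with $\tfrac{d}{dx}(e^{i\phi_l x}\beta_l)\in L^1$ rather than merely BV; the paper arranges this by mollification up to an $L^1$ correction (Lemma~2.2). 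Neither of these last two points is conceptually deep, but the $K=0$ resonance is a real missing idea, not just bookkeeping.
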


As we increase $p$ in Theorem~\ref{T1.1}, we get larger sets $S_p$ of allowed positive eigenvalues in \eqref{1.7}. It is natural to ask whether these eigenvalues are really possible. We will construct examples for which these points are indeed eigenvalues of $H_\theta$.

For concreteness, we will construct examples with power-law decay. In what follows,  the potential will have the form
\begin{equation}\label{1.8}
V(x) = \sum_{k=1}^K \lambda_k \frac 1{x^{\gamma}} \cos(\alpha_k x + \xi_k(x)) + \beta_0(x), \qquad x\ge x_0
\end{equation}
where
\begin{equation}\label{1.9}
\gamma \in \left(\frac 1p, \frac 1{p-1} \right],
\end{equation}
$\lambda_k>0$ and
\begin{equation}\label{1.10}
\beta_0(x)\in C^1,\quad \beta'_0(x) = O(x^{-p\gamma}), \quad \beta_0(x) = O(x^{-\gamma}), \quad x\to \infty.
\end{equation}
The functions $\xi_k(x)\in C^1$ are chosen so that
\begin{equation}
\xi'_k(x) = O(x^{-(p-1)\gamma}), \quad x\to \infty,
\end{equation}
which ensures that \eqref{1.8} has generalized bounded variation with the set of phases $\{0,\pm \alpha_1,\dots,\pm \alpha_K\}$. Moreover, $\xi_k(x)$ will often obey the stronger condition
\begin{equation}\label{1.12}
\xi'_k(x) = O(x^{-p\gamma}), \quad x\to \infty,
\end{equation}
 implying in particular that $\xi_k(x)$ has bounded variation.

With the choice \eqref{1.9}, we have $V\in L^p$. Because of Theorem~\ref{T1.1}, we will focus on an eigenvalue $E \in S_p \setminus S_{p-1}$.
We will construct a real-valued solution $u(x)$ of
\begin{equation}\label{1.13}
-u''(x) + V(x)u(x)=Eu(x)
\end{equation}
with the asymptotic behavior
\begin{equation}\label{1.14}
\frac 1{\sqrt{E}} u'(x) + i u(x)  = A f(x) e^{i [\sqrt{E} x  + \theta_\infty]} (1+ o(1)), \quad x\to \infty
\end{equation}
with
\begin{equation}
f(x) = \begin{cases}
x^{-C \lambda_{j_1} \dots \lambda_{j_{p-1}}} & \gamma = \frac 1{p-1} \\
 \exp\left( - \frac C{1-(p-1)\gamma} \lambda_{j_1} \dots \lambda_{j_{p-1}} x^{1-(p-1)\gamma} \right)  & \gamma \in (\frac 1p, \frac 1{p-1})
 \end{cases}
\end{equation}
and $A, C>0$. At one step of the construction, we will have to cancel out a function of bounded variation, and the easiest way to do that will be by adjusting $\beta_0(x)$ in \eqref{1.8}; we state this as a theorem.

\begin{thm}\label{T1.2}
Let $V$ be given by \eqref{1.8} and \eqref{1.9} and let $E \in S_p \setminus S_{p-1}$. 
For any choice of $\alpha_1,\dots, \alpha_K$ away from an algebraic set of codimension $1$ (i.e., any choice not obeying a non-trivial polynomial relation), we can make a choice of $\beta_0(x)$ consistent with \eqref{1.10} and functions $\xi_k(x) \in C^1$ with \eqref{1.12} such that \eqref{1.13} has a real-valued solution $u(x)$ with asymptotics \eqref{1.14}.

The constant $C$ depends only on $\alpha_1$, \dots, $\alpha_K$ and $E$. In particular, for $\gamma \in (\frac 1p, \frac 1{p-1})$ or for large enough values of the product $\lambda_{j_1} \dots \lambda_{j_{p-1}}$, there is a choice of boundary condition $\theta$ such that $E$ is an eigenvalue of $H_\theta$.
\end{thm}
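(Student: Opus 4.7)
The plan is to recast \eqref{1.13} as a first-order system for the complex Pr\"ufer-type variable $z(x) = u'(x)/\sqrt{E} + i u(x)$, whose free evolution (when $V\equiv 0$) is $e^{i\sqrt{E}x}$. Writing $z = e^{i\sqrt{E}x} w$, one computes that $w$ satisfies
\[
w'(x) = -\frac{i V(x)}{2\sqrt{E}}\bigl(w(x) - e^{-2i\sqrt{E}x}\,\overline{w(x)}\bigr).
\]
Expanding \eqref{1.8} into complex exponentials writes the right-hand side as a sum of terms $c_\eta(x)\,e^{i\eta x}\,w$ and $c_\eta(x)\,e^{i\eta x}\,\overline{w}$, with $c_\eta(x) = O(x^{-\gamma})$ and $\eta$ in the set of phases $A = \{0,\pm\alpha_1,\dots,\pm\alpha_K\}$. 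Proving \eqref{1.14} reduces to showing $w(x) \to A e^{i\theta_\infty}\,f(x)(1+o(1))$ as $x\to\infty$.

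The main device is a Harris--Lutz type iterative near-identity transformation: at stage $j$ one substitutes $w \mapsto (1+g_j(x))\,w + h_j(x)\,\overline{w}$, where $g_j,h_j$ are chosen by integrating by parts against $e^{i\eta x}/(i\eta)$ so as to remove every non-resonant Fourier mode present at the current leading order $x^{-j\gamma}$. Each step improves the order to $x^{-(j+1)\gamma}$ at the cost of generating new frequencies from the $(j+1)$-fold sumset of $A$. The hypotheses $E \in S_p\setminus S_{p-1}$ and the generic non-vanishing of polynomial relations among the $\alpha_k$ guarantee that every small denominator $\eta$ (for the $w$-coefficient) and $\eta \pm 2\sqrt{E}$ (for the $\overline{w}$-coefficient) is bounded away from zero for $j=1,\dots,p-2$, so these are genuine near-identity maps. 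After $p-1$ steps the \emph{first} resonance appears: combinations $\eta = \pm\alpha_{j_1}\pm\cdots\pm\alpha_{j_{p-1}}$ with $\eta = \pm 2\sqrt{E}$ now contribute to the coefficient of $\overline{w}$ with an explicit cumulative amplitude proportional to $\prod_r \lambda_{j_r}$.

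Writing $w = R\,e^{i\sigma}$ after these substitutions, the resonant part collapses to a scalar ODE
\[
(\log R)'(x) = -\frac{C'}{x^{(p-1)\gamma}}\bigl(1 + o(1)\bigr), \qquad \sigma'(x) = o\bigl(x^{-(p-1)\gamma}\bigr),
\]
whose integration gives exactly the two cases of $f(x)$ (logarithmic integration when $\gamma = 1/(p-1)$, a stretched exponential otherwise), along with $\sigma \to \theta_\infty$. The $o(1)$ errors come from non-resonant residues of order $x^{-p\gamma} \in L^1$, which are absorbed by a standard Picard/contraction argument in a weighted norm once the leading ODE is explicit. I expect the main obstacle to appear at one intermediate stage, where the would-be near-identity transform produces a residual coefficient that is only of bounded variation rather than $L^1$, so integration by parts no longer succeeds. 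The fix, as the author signals, is to cancel this residual by absorbing it into $\beta_0$: since \eqref{1.10} only constrains $\beta_0$ in size, one can redefine $\beta_0 \mapsto \beta_0 - r(x)$ for an explicit $r$ extracted from the iteration, and correspondingly adjust the phases $\xi_k$ within the class \eqref{1.12}. The generic algebraic condition on the $\alpha_k$ is precisely what prevents $r$ from being forced to carry extra oscillations that would frustrate this substitution.

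Finally, $f(x) \to 0$ in both regimes, but $L^2$-integrability near $\infty$ requires either $\gamma < 1/(p-1)$ (automatic stretched-exponential decay) or $C\lambda_{j_1}\cdots\lambda_{j_{p-1}} > 1/2$ at the borderline $\gamma = 1/(p-1)$, which matches the hypotheses in the theorem. The constructed solution $u$ is real and lies in $L^2$ near $+\infty$; choosing $\theta \in [0,\pi)$ so that $u'(0)\sin\theta = u(0)\cos\theta$ places $u \in D(H_\theta)$, yielding $E$ as an eigenvalue of $H_\theta$.
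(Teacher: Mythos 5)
Your overall skeleton matches the paper's: iterative Pr\"ufer-type transformations (your Harris--Lutz near-identity maps are the paper's Lemma~\ref{L4.1} integration-by-parts step in another guise), detection of the first resonance at the $(p-1)$-fold sumset when $E\in S_p\setminus S_{p-1}$, and absorption of the residual bounded-variation term into $\beta_0$. That last step is done in the paper exactly as you sketch, via an iterative replacement $\beta_0 \mapsto \beta_0+\eta\Omega$ that bootstraps $\Omega(x)$ from $O(x^{-\gamma})$ to $O(x^{-p\gamma})\in L^1$.

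However, there is a genuine gap at the heart of your argument: you simply assert that after the substitutions ``the resonant part collapses to a scalar ODE $(\log R)' = -C' x^{-(p-1)\gamma}(1+o(1))$.'' This is not automatic. What the iteration actually produces is $\frac{d}{dx}\log R \sim \Im\bigl(\Lambda\, x^{-(p-1)\gamma} e^{i[\xi(x)+2\theta(x)]}\bigr)$, and the sign and magnitude of that imaginary part depend on the Pr\"ufer phase $\theta(x)$ and the free phase functions $\xi_k(x)$. A priori $\Im\bigl(\Lambda e^{i\psi(x)}\bigr)$ with $\psi=\xi+2\theta$ need not even have a definite sign, so one cannot read off exponential or power-law decay. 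The paper resolves this in Lemma~\ref{L6.3}: one \emph{constructs} $\xi(x)$ as the solution to $\xi'(x) = -2\Re(\Lambda x^{-(p-1)\gamma}e^{i\psi(x)})$ so that $\psi'(x)\sim 0$, and then one must prove that the limit $\psi_\infty$ can be steered to the specific value $-\pi/2-\arg\Lambda$ (so that $\Im(\Lambda e^{i\psi_\infty})=-|\Lambda|$). That requires a quantitative, uniform-in-initial-data rate of convergence and a topological (degree/onto) argument on the circle. Without some version of this, your scalar ODE for $\log R$ is unjustified, and the asymptotics \eqref{1.14} do not follow. You also need the convergence $\theta(x)\to\theta_\infty$ to even state \eqref{1.14}, which again hinges on this phase construction. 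This is the missing idea; the rest of your outline is sound and parallels the paper.
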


Adjusting $\beta_0(x)$ was just one way to construct solutions with the asymptotics \eqref{1.14}. Another approach is possible, which merely adjusts the $\lambda_k$; however, some restrictions on $E$ apply. We present the result of this method for the $p=3$ case; the general case can be analyzed in the same way. The significance of this theorem is that it doesn't require the addition of any new terms such as $\beta_0(x)$.

\begin{thm}\label{T1.3}
Let $V$ be given by \eqref{1.8} with $p=3$, \eqref{1.9} and $\beta_0(x)=0$ and let $E \in S_p \setminus S_{p-1}$. For any choice of $\alpha_1,\dots, \alpha_K$ away from an algebraic set of codimension $1$ (i.e., any choice not obeying a non-trivial polynomial relation), assume
\begin{equation}\label{1.16}
\min\left\{\frac{\alpha_1^2}4,\dots,\frac{\alpha_K^2}4\right\} < E < \max\left\{\frac{\alpha_1^2}4,\dots,\frac{\alpha_K^2}4\right\}.
\end{equation}
Then for any $\lambda_1, \dots, \lambda_K>0$ which obey
\begin{equation}
\sum_{k=1}^K \frac{\lambda_k^2}{4 E - \alpha_k^2} = 0,
\end{equation}
there exist functions $\xi_k(x) \in C^1$ with \eqref{1.12} such that \eqref{1.13} has a real-valued solution $u(x)$ with asymptotics \eqref{1.14}.

The constant $C$ depends only on $\alpha_1$, \dots, $\alpha_K$ and $E$. In particular, for $\gamma \in (\frac 1p, \frac 1{p-1})$ or for large enough values of the product $\lambda_{j_1} \dots \lambda_{j_{p-1}}$, there is a choice of boundary condition $\theta$ such that $E$ is an eigenvalue of $H_\theta$.
\end{thm}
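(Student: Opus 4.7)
The plan is to adapt the construction used for Theorem~\ref{T1.2}, performing successive Harris--Lutz-style transformations on \eqref{1.13}, but arranging that the quadratic ``mean'' correction which Theorem~\ref{T1.2} cancelled by tuning $\beta_0$ now vanishes automatically from the algebraic hypothesis $\sum_k \lambda_k^2/(4E-\alpha_k^2)=0$.

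First I pass to modified Pr\"ufer variables: let $\kappa=\sqrt E$, $z(x)=u'(x)/\kappa + iu(x)$, and $w(x)=e^{-i\kappa x}z(x)$. A direct computation turns \eqref{1.13} into
\[
w'(x) = -\frac{iV(x)}{2\kappa}\bigl(w(x)-e^{-2i\kappa x}\overline{w(x)}\bigr).
\]
Using $2\cos(\alpha_k x+\xi_k)=e^{i\xi_k}e^{i\alpha_k x}+e^{-i\xi_k}e^{-i\alpha_k x}$, the right-hand side is a finite combination of $w$ and $\overline{w}$ against $x^{-\gamma}$ and slowly-varying $C^1$ phases $e^{i\eta x}$ with $\eta\in A\cup(A-2\kappa)\cup(A+2\kappa)$, where $A=\{\pm\alpha_1,\dots,\pm\alpha_K\}$. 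The assumption $E\in S_3\setminus S_2$ forces $2\kappa\notin A$, so every such $\eta$ is non-zero. A first Harris--Lutz substitution $w=(1+Y_1(x))\tilde w$, with $Y_1=O(x^{-\gamma})$ obtained by integrating each $e^{i\eta x}x^{-\gamma}$ term by parts once, eliminates all these first-order oscillations and produces an equation for $\tilde w$ whose right-hand side is $O(x^{-2\gamma})$.

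Next I expand the quadratic remainder from $Y_1V$ into its harmonics. The phases that appear lie in $\{\pm\alpha_i\pm\alpha_j,\ \pm\alpha_i\pm\alpha_j\pm2\kappa\}$, and by the genericity of $\alpha_1,\dots,\alpha_K$ the only phases that vanish are the diagonal pairings $\alpha_i-\alpha_i=0$ and the resonances $\alpha_i+\alpha_j=\pm2\kappa$ that encode $E\in S_3$. Careful bookkeeping of the Harris--Lutz integrals shows that the constant (non-oscillatory) coefficient produced in the $\tilde w$ equation is, up to a universal factor,
\[
\sum_{k=1}^{K}\lambda_k^2\left(\frac{1}{2\kappa-\alpha_k}+\frac{1}{2\kappa+\alpha_k}\right) = 4\kappa\sum_{k=1}^{K}\frac{\lambda_k^2}{4E-\alpha_k^2},
\]
which vanishes by hypothesis; condition \eqref{1.16} is exactly what makes such a relation solvable with $\lambda_k>0$. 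This is the step at which the algebraic constraint on the $\lambda_k$'s plays the role that $\beta_0$ played in Theorem~\ref{T1.2}. A second Harris--Lutz substitution $\tilde w=(1+Y_2(x))\hat w$ removes the remaining non-resonant second-order oscillations (all with non-zero phases by genericity), leaving
\[
\hat w'(x) = -\frac{iV_{\mathrm{res}}(x)}{2\kappa}\bigl(a(x)\hat w(x)-b(x)e^{-2i\kappa x}\overline{\hat w(x)}\bigr)+(\text{integrable error}),
\]
where $V_{\mathrm{res}}$ is built only from the resonant pairs $(i,j)$ with $\alpha_i+\alpha_j=\pm 2\kappa$, is of size $x^{-(p-1)\gamma}=x^{-2\gamma}$, and $a,b$ are bounded and of bounded variation.

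The leading resonant equation is of the same type analysed in the proof of Theorem~\ref{T1.2}, so it can be solved explicitly: its solution obeys $|\hat w(x)|\asymp f(x)$, with $f$ as in \eqref{1.14} and $C$ depending only on the resonant data $(\alpha_k,E)$. Exactly as in Theorem~\ref{T1.2}, the functions $\xi_k(x)\in C^1$ with \eqref{1.12} are then chosen to fix the limiting phase $\theta_\infty$ and preserve the bounded-variation structure; crucially, the choice of $\xi_k$ does not alter the algebraic identity used above, so $\beta_0$ can be taken to be zero. Undoing the substitutions $(1+Y_2)$, $(1+Y_1)$ and $w\mapsto z$, and taking an appropriate real combination, produces the real-valued solution $u$ with the asymptotics \eqref{1.14}. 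I expect the main obstacle to be the bookkeeping in the quadratic step: verifying that the non-oscillatory coefficient really is a universal multiple of $\sum_k\lambda_k^2/(4E-\alpha_k^2)$ and that no additional constant contributions arise from cross-products of distinct harmonics or from the $\xi_k$ slow-phase corrections. Once this algebraic identity is in place, the analytic estimates for the Harris--Lutz integrals and the solvability of the leading resonant system are the same as in Theorem~\ref{T1.2}.
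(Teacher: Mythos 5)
Your proposal is correct and takes essentially the same route as the paper: your Harris--Lutz substitutions and the resulting quadratic remainder reproduce the paper's Pr\"ufer-variable machinery and Lemma~\ref{L4.1}, and your central observation --- that the non-oscillatory quadratic coefficient is a universal multiple of $\sum_k \lambda_k^2/(4E-\alpha_k^2)$ and so vanishes by hypothesis, removing the need for $\beta_0$ --- is exactly the paper's computation that with $\beta_0=0$ and $p=3$ the quantity $\Omega(x)$ from \eqref{6.11} reduces to $\frac{1}{\eta}\sum_{k}\frac{\lambda_k^2}{\eta^2-\alpha_k^2}x^{-2\gamma}\equiv 0$. After that both arguments reduce to the proof of Theorem~\ref{T1.2} (the resonant term plus the phase-fixing choice of $\xi_k$ in Lemma~\ref{L6.3}), so the bookkeeping you flag is genuinely the crux, and it is handled by the Section~\ref{S5} identities in the paper.
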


\begin{remark}
The proofs of Theorems~\ref{T1.2} and \ref{T1.3} can be adapted to construct potentials with several embedded eigenvalues, as long as we have enough functions $\xi_k(x)$ at our disposal; we need to be able to separately control suitable linear combinations of the $\xi_k(x)$, given by \eqref{6.8}, which means that we can construct at most $K$ embedded eigenvalues. Choosing a subset of eigenvalues from $S_p \setminus S_{p-1}$ for which the linear combinations \eqref{6.8} are linearly independent, the construction just needs to be done simultaneously for all of them, which amounts to treating the equations for $\xi_k(x)$ as a coupled system of differential equations.
\end{remark}

The method used to construct solutions with the asymptotics \eqref{1.14} extends directly to the setting of orthogonal polynomials on the real line or unit circle, using the methods in Lukic~\cite{Lukic1}.

Independently, in the setting of discrete Schr\"odinger operators, Kr\"uger~\cite{Kruger12} has recently shown existence of embedded eigenvalues for  some potentials of generalized bounded variation. The method in \cite{Kruger12} is different than ours, but the constructed potentials are of the same form as in our Theorem~\ref{T1.2}. However, the precise asymptotics of the form \eqref{1.14}, the critical case $\gamma=\frac 1{p-1}$ and Theorem~\ref{T1.3} are new.

We begin by discussing some relevant properties of functions of generalized bounded variation in Section~\ref{S2} and Pr\"ufer variables in Section~\ref{S3}. Section~\ref{S4} will prove Theorem~\ref{T1.1}, except for some functional identities postponed to Section~\ref{S5}. Section~\ref{S6} builds upon the method of Sections~\ref{S4} and \ref{S5} to prove Theorems~\ref{T1.2} and \ref{T1.3}.

It is a pleasure to thank David Damanik, Yoram Last and Barry Simon for useful suggestions and discussions.

\section{Generalized bounded variation}\label{S2}

In this section we describe some properties of functions of rotated and generalized bounded variation which will be needed later. The first lemma lists some elementary properties.

\begin{lemma} Let $\phi, \psi\in\mathbb{R}$, let $A,B,C\subset \mathbb{R}$ be finite sets, and $\beta(x)$,  $\gamma(x)$ functions on $(0,\infty)$. Then
\begin{enumerate}[\rm (i)]
\item If $\beta(x)$ has rotated bounded variation, then $\beta(x)$ is bounded;

\item If $\beta(x)$ and $\gamma(x)$ have rotated bounded variation with phases $\phi$ and $\psi$, respectively, then $\beta(x)\gamma(x)$ has rotated bounded variation with phase $\phi+\psi$.
\end{enumerate}
\end{lemma}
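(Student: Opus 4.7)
The plan is to unwind the definition of rotated bounded variation in each part and reduce to standard facts about bounded variation on $(0,\infty)$.

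For (i), I would set $f(x) = e^{i\phi x}\beta(x)$, which by hypothesis has bounded variation. Fixing any reference point $x_0 > 0$, the defining telescoping estimate gives, for every $x>0$, $|f(x)| \le |f(x_0)| + \mathrm{Var}(f) < \infty$. Since $|e^{i\phi x}|=1$, this transfers immediately to $|\beta(x)| = |f(x)|$, so $\beta$ is bounded.

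For (ii), the approach is to factor
\[
e^{i(\phi+\psi)x}\beta(x)\gamma(x) = \bigl(e^{i\phi x}\beta(x)\bigr)\bigl(e^{i\psi x}\gamma(x)\bigr),
\]
so it suffices to know that the pointwise product of two bounded complex-valued functions of bounded variation is again of bounded variation. I would prove this directly from the telescoping identity
\[
f_n g_n - f_{n-1}g_{n-1} = f_n(g_n - g_{n-1}) + g_{n-1}(f_n - f_{n-1});
\]
summing absolute values over a partition and taking the supremum yields $\mathrm{Var}(fg) \le \|f\|_\infty \mathrm{Var}(g) + \|g\|_\infty \mathrm{Var}(f)$. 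Part (i) supplies the required boundedness of the two factors $e^{i\phi x}\beta(x)$ and $e^{i\psi x}\gamma(x)$, so the right-hand side is finite, which is precisely the statement that $\beta\gamma$ has rotated bounded variation with phase $\phi+\psi$.

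There is essentially no obstacle here. Both items are immediate consequences of elementary properties of bounded variation, and the only content specific to the rotated setting is the trivial algebraic factorization of $e^{i(\phi+\psi)x}$; allowing complex-valued rather than real-valued functions only changes constants in the standard real-variable estimates.
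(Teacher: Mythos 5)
Your proof is correct and is exactly the ``straightforward'' argument the paper has in mind: the paper itself provides no proof for this lemma, only remarking that it is elementary and analogous to Lemma~2.1 of the cited reference. Unwinding the definition to reduce (i) to boundedness of BV functions and (ii) to the fact that a pointwise product of bounded BV functions is BV, via the telescoping identity and the algebraic factorization $e^{i(\phi+\psi)x}\beta\gamma = (e^{i\phi x}\beta)(e^{i\psi x}\gamma)$, is precisely what is intended.
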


The proof is straightforward and analogous to the proof of \cite[Lemma 2.1]{Lukic1}.

We remind the reader that our potential $V$ has the decomposition \eqref{1.3}, where $\beta_l$ has rotated bounded variation with phase $\phi_l\in A$ and $W \in L^1$. This decomposition is not unique, and it will be useful to make some adjustments to it. First, it will be useful to adjust the breakup in \eqref{1.3} so that $\beta_l \in C^1$.

\begin{lemma}\label{L2.2} If $V(x)$ is of the form \eqref{1.3}, with $\beta_l$, $W$ as described there, then the breakup can be adjusted so that, in addition to assumptions stated there, $\beta_l \in C^1$ and
\begin{equation}
\frac{d}{dx} \bigl( e^{i\phi_l x} \beta_l(x) \bigr) \in L^1
\end{equation}
\end{lemma}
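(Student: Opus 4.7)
The plan is to refine the given decomposition by separately mollifying each factor $g_l(x) := e^{i\phi_l x}\beta_l(x)$ and pushing the resulting error into $W$. The starting observation is that, by hypothesis, $g_l$ has bounded variation on $(0,\infty)$, so its distributional derivative is a finite complex Stieltjes measure $\mu_l$, the right limit $g_l(0^+)$ exists, and $g_l$ is bounded.

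I would extend $g_l$ to $\mathbb{R}$ by the constant value $g_l(0^+)$ on $(-\infty,0]$; the extension is still BV with the same total variation. Fix a non-negative $\rho \in C^\infty_c(\mathbb{R})$ with $\supp\rho \subset [0,1]$ and $\int\rho = 1$, and set $\tilde g_l := \rho * g_l$ and $\tilde\beta_l(x) := e^{-i\phi_l x}\tilde g_l(x)$. Then $\tilde g_l \in C^\infty$ and hence $\tilde\beta_l \in C^1$. Differentiating under the integral sign and integrating by parts identifies $\tilde g_l' = \rho * \mu_l$, and Fubini then gives $\|\tilde g_l'\|_{L^1(\mathbb{R})} \le \|\rho\|_{L^1} |\mu_l|(\mathbb{R}) < \infty$, which is exactly the condition $\frac{d}{dx}(e^{i\phi_l x}\tilde\beta_l) \in L^1$.

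For the replacement of $\beta_l$ by $\tilde\beta_l$ to be lossless in the decomposition \eqref{1.3}, the error must land in $L^1$ so that it can be absorbed into $W$. The pointwise estimate
\[
|g_l(x) - \tilde g_l(x)| = \left|\int_0^1 \rho(t) \bigl(g_l(x) - g_l(x-t)\bigr)\,dt\right| \le \int_0^1 \rho(t)\,|\mu_l|((x-t,x])\,dt \le |\mu_l|((x-1,x])
\]
followed by another application of Fubini yields $\|g_l - \tilde g_l\|_{L^1(0,\infty)} \le |\mu_l|(\mathbb{R}) < \infty$. Since $|\beta_l - \tilde\beta_l| = |g_l - \tilde g_l|$, I can replace the breakup by $V = \sum_l \tilde\beta_l + W_{\mathrm{new}}$ with $W_{\mathrm{new}} := W + \sum_l(\beta_l - \tilde\beta_l) \in L^1$.

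No step is genuinely difficult, but the single technical pillar is the identification of $\tilde g_l'$ with the convolution $\rho * \mu_l$ of a smooth function against a finite Borel measure. This identity supports both the $L^1$-finiteness of the new derivative and the total-variation bound on the absorbed error, and is what makes the whole argument work; the remaining subtlety is only the routine one of extending $g_l$ to a neighborhood of the left endpoint so that the convolution is meaningful down to $x = 0$.
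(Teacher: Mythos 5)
Your proof is correct and takes the same essential approach as the paper: mollify $e^{i\phi_l x}\beta_l(x)$ with a compactly supported smooth bump, observe the mollified derivative is in $L^1$ because the total variation is finite, and absorb the $L^1$-small mollification error into $W$. The only difference is cosmetic --- the paper first reduces to real increasing functions, while you work directly with the Stieltjes measure $\mu_l$, which is a tidy shortcut that makes the Fubini estimates for both $\|\tilde g_l'\|_{L^1}$ and $\|g_l - \tilde g_l\|_{L^1}$ immediate.
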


This reduces to an observation made by Weidmann~\cite{Weidmann67} in conjuction with the proof of his theorem; therefore, we provide only an outline of the proof.

\begin{proof}[Outline of proof]
By linearity, it suffices to prove this fact for $L=1$ and by multiplying by $e^{-i\phi_l x}$, it suffices to prove it when $\beta = \beta_1$ has bounded variation. Since every function of bounded variation is the linear combination of four bounded real-valued increasing functions, by linearity it suffices to prove it when $\beta$ is an increasing function. 

Extend $\beta$ to a function on $\mathbb{R}$, with $\beta(y) = \lim\limits_{x\downarrow 0} \beta(x)$ for $y\le 0$. Pick $j \in C_0^\infty(-1,1)$ with $j\ge 0$ and $\lVert j\rVert_1=1$ and define $\tilde\beta = j * \beta$.  It can easily be verified that $\tilde\beta\in C^1$, $\tilde\beta$ is an increasing function, and $\lim\limits_{x\to \pm\infty} \tilde\beta(x) = \lim\limits_{x\to \pm\infty} \beta(x)$. Further, it can be proved that  since $\beta$ has bounded variation, $\beta - \tilde \beta \in L^1$. Thus, replacing $\beta$ by $\tilde\beta$ in the decomposition \eqref{1.3} and absorbing $\beta - \tilde \beta$ into $W(x)$ fulfills all the requirements.
\end{proof}

The next step is to show that $\beta_l \in L^p$.

\begin{lemma}\label{L2.3}
Let $f \in L^\infty$. If $f \in L^1 + L^p$, then $f \in L^p$.
\end{lemma}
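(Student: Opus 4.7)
My plan is to use the hypothesis $f \in L^\infty$ to trade $L^1$ information for $L^p$ information on a carefully chosen set. Concretely, write the $L^1+L^p$ decomposition as $f = g + h$ with $g \in L^1$ and $h \in L^p$, and split the line according to where $|g|$ is large.

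First I would let $E = \{x \in (0,\infty) : |g(x)| > 1\}$. By Chebyshev's inequality, $|E| \le \|g\|_1 < \infty$. On $E$ I have $|f| \le \|f\|_\infty$, so $f\chi_E$ is bounded on a set of finite measure, which immediately gives $f\chi_E \in L^p$ with $\int_E |f|^p \le \|f\|_\infty^p |E|$.

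On the complement, $|g| \le 1$, so $|g|^p \le |g|$ and $g\chi_{E^c}$ is in $L^p$ (in fact $\|g\chi_{E^c}\|_p^p \le \|g\|_1$). Since $h \in L^p$ as well, $f\chi_{E^c} = g\chi_{E^c} + h\chi_{E^c} \in L^p$. Adding the two pieces yields $f \in L^p$.

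No step looks like a serious obstacle; the only subtle point is noticing that the $L^\infty$ hypothesis is only needed on the set $E$, where $|g|$ is too large to directly control via $|g|^p \le |g|$. The decomposition into $\{|g|>1\}$ and its complement is exactly what makes both inequalities $|f|^p \le \|f\|_\infty^{p-1}|f|$ (on $E$, applied to $f$) and $|g|^p \le |g|$ (on $E^c$) usable simultaneously.
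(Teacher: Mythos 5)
Your proof is correct, and it takes a genuinely different route from the paper's. The paper modifies the additive decomposition itself: starting from $f=f_1+f_p$, it replaces $f_1$ by $\tilde f_1(x)=\frac{|f_1(x)|}{|f_1(x)|+|f_p(x)|}f(x)$ (and $f_p$ by $f-\tilde f_1$), which forces $|\tilde f_1|\le|f|$ pointwise; then $\tilde f_1\in L^1\cap L^\infty\subset L^p$ by the standard interpolation inequality, and $f=\tilde f_1+\tilde f_p\in L^p$. You instead leave the decomposition $f=g+h$ untouched and partition the domain into $E=\{|g|>1\}$, where Chebyshev gives $|E|\le\|g\|_1<\infty$ and boundedness of $f$ gives $\int_E|f|^p\le\|f\|_\infty^p|E|$, and $E^c$, where $|g|^p\le|g|$ puts $g\chi_{E^c}\in L^p$. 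The paper's version is slightly slicker in that it reduces the lemma to the single well-known containment $L^1\cap L^\infty\subset L^p$; yours avoids the somewhat ad hoc redefinition of $f_1$ and makes it transparent that the $L^\infty$ hypothesis is only needed on a set of finite measure. Both are short and elementary, and either would serve the paper equally well.
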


\begin{proof}
Let $f\in L^\infty$ and
\begin{equation}\label{2.2}
f(x) = f_1(x) +f_p(x)
\end{equation}
 with $f_1 \in L^1$ and $f_p\in L^p$. Without increasing $\lVert f_1 \rVert_1$ and $\lVert f_p \rVert_p$, we can adjust the breakup \eqref{2.2} so that $\lvert f_1(x) \rvert \le  \lvert f(x) \rvert$ for all $x$ (for example, replace $f_1(x)$ by
 \[
 \tilde f_1(x) = \begin{cases} \frac{\lvert f_1(x) \rvert}{\lvert f_1(x) \rvert+\lvert f_p(x) \rvert} f(x) & f(x)\neq 0 \\
 0 & f(x) =0\end{cases}
 \]
 and replace $f_p$ by $\tilde f_p = f - \tilde f_1$). Now $f\in L^\infty$ implies $f_1 \in L^\infty$, and this together with $f_1\in L^1$ implies $f_1\in L^p$ by H\"older's inequality. Thus, $f =  f_1 + f_p \in L^p$.
\end{proof}

\begin{lemma}\label{L2.4}
Let $1\le p\le \infty$. If  $V\in L^1 + L^p$ and $V$ has generalized bounded variation with the set of phases $\{\phi_1,\dots, \phi_L\}$, with the $\beta_l$ as in Definition~\ref{D1.1}, then $\beta_l \in L^p$ for all $l$.
\end{lemma}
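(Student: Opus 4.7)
The plan is to reduce the conclusion to Lemma~\ref{L2.3} via a translation-difference argument that separates the individual $\beta_l$ from their sum. Since any function of rotated bounded variation is bounded, each $\beta_l \in L^\infty$, so by Lemma~\ref{L2.3} it is enough to prove $\beta_l \in L^1 + L^p$. Setting $g := V - W = \sum_{l=1}^L \beta_l$, I have $g \in L^1 + L^p$, and the task reduces to separating this sum into its individual summands.

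The key identity is the following. Writing $f_l(x) = e^{i\phi_l x}\beta_l(x)$, which has bounded variation by hypothesis, a direct computation gives
\[
\beta_l(x-h) = e^{i\phi_l h}\beta_l(x) + e^{i\phi_l h}e^{-i\phi_l x}\bigl(f_l(x-h)-f_l(x)\bigr)
\]
for any $h>0$, and the last term is an $L^1$ function of $x$ with norm at most $h$ times the total variation of $f_l$. Summing over $l$ yields the linear relation
\[
g(x-h) = \sum_{l=1}^L e^{i\phi_l h}\beta_l(x) + E(x,h), \qquad E(\cdot,h) \in L^1,
\]
in which the $\beta_l(x)$ appear as unknowns with known scalar coefficients.

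I then apply this relation at $L$ distinct translation parameters $h_j = jh$, $j = 0,1,\dots,L-1$. The resulting coefficient matrix $M_{jl} = (e^{i\phi_l h})^j$ is Vandermonde with nodes $e^{i\phi_l h}$; for $h$ outside the countable bad set where $(\phi_l-\phi_m)h \in 2\pi\mathbb{Z}$ for some $l\ne m$, the nodes are distinct and $M$ is invertible. Since each $g(\cdot-h_j)$ belongs to $L^1 + L^p$ (translations preserve this space) and each $E(\cdot,h_j)\in L^1$, inverting the Vandermonde system expresses every $\beta_l$ as a linear combination of $L^1+L^p$ functions, so $\beta_l \in L^1+L^p$ and Lemma~\ref{L2.3} gives $\beta_l \in L^p$. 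The main conceptual step is the translation identity that encodes the frequency-$\phi_l$ structure of each $\beta_l$; after that, the separation is routine Vandermonde inversion, and the minor bookkeeping issue of $g$ being defined only on $(0,\infty)$ is handled by extending by zero below the origin, the contribution near $0$ being automatic from boundedness of $\beta_l$.
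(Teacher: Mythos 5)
Your proof is correct, and while it shares the paper's high-level strategy (translate the sum $g=\sum_l\beta_l$, exploit bounded variation to modify by $L^1$ errors, then take a linear combination that isolates a single $\beta_l$, and finish with Lemma~\ref{L2.3}), the mechanism you use to invert the frequency data is genuinely different. The paper first invokes Lemma~\ref{L2.2} to mollify each $\gamma_k=e^{i\phi_k x}\beta_k$ so that $\gamma_k'\in L^1$, obtaining the translation bound from $\|\gamma_k'\|_1$, and then uses a carefully chosen set of offsets $T=\{\pi/(\phi_l-\phi_k):k\ne l\}$ so that summing over all subsets $Q\subset T$ with weights $e^{i\phi_l s(Q)}$ produces the product identity $\prod_{q\in T}(1+e^{i(\phi_l-\phi_k)q})=2^{L-1}\delta_{kl}$; this requires $2^{L-1}$ translates and verifying a slightly magical cancellation. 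You instead take an arithmetic progression of translates $h_j=jh$, $j=0,\dots,L-1$, and invert the resulting Vandermonde system for a generic $h$ (chosen so the nodes $e^{i\phi_l h}$ are distinct); this needs only $L$ translates, is arguably more transparent, and dispenses with Lemma~\ref{L2.2} entirely since the bound $\int_0^\infty \lvert f(x-h)-f(x)\rvert\,dx\le h\,V(f)$ holds for any bounded-variation $f$ via the Jordan decomposition, no smoothness required. Your treatment of the half-line boundary (extension by zero plus boundedness of the $\beta_l$ to keep the discrepancy on $(0,(L-1)h)$ in $L^1$) closes the one loose end; the argument is complete.
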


\begin{proof}
Since $\beta_k(x)$ has rotated bounded variation, it is a bounded function. Thus, $\sum_{k=1}^L\beta_k \in L^\infty$. Since $\sum_{k=1}^L \beta_k = V - W \in L^1 +L^p$, Lemma~\ref{L2.3} implies $\sum_{k=1}^L \beta_k \in L^p$.

At this point, it will be convenient to  introduce $\gamma_k(x) = e^{i\phi_k x} \beta_k(x)$. By Lemma~\ref{L2.2}, we can assume that $\gamma_k \in C^1$ and $\gamma_k' \in L^1$. Then for any $A>0$,
\begin{equation}\label{2.3}
\gamma_k(x) - \gamma_k(x+A) \in L^1
\end{equation}
because
\[
\int \lvert \gamma_k(x) - \gamma_k(x+A) \rvert dx \le \int \Bigl\lvert \int_x^{x+A} \gamma_k'(t) dt \Bigr\rvert dx \le A \int \lvert \gamma_k'(t) \rvert dt  =  A \lVert \gamma_k' \rVert_1 
\]
The idea now will be to take
\begin{equation}\label{2.4}
\sum_{k=1}^L e^{-i \phi_k x} \gamma_k(x)  \in L^p
\end{equation}
and translate it with various offsets, then take a linear combination of those translates in a way that will cancel all but one of the functions $\gamma_k$, up to an $L^1$ term. Fix $l$. Let $T = \{ \pi /(\phi_l - \phi_k) \mid 1 \le k \le L, k \neq l\}$.  Let $Q \subset T$ and denote $s(Q) = \sum_{q\in Q} q$. From \eqref{2.4}, perform the change of variables $x \mapsto x + s(Q)$ 
to get
\[
\sum_{k=1}^L e^{-i \phi_k (x+s(Q) )} \gamma_k(x+ s(Q))  \in L^p
\]
and then, by using \eqref{2.3}, 
\begin{equation}\label{2.5}
 \sum_{k=1}^L e^{-i \phi_k (x+s(Q))} \gamma_k(x)  \in L^1 + L^p
\end{equation}
Multiplying \eqref{2.5} by $e^{i \phi_l s(Q)}$ and summing over all $Q\subset T$ gives
\begin{equation}\label{2.6}
\sum_{Q\subset T} \sum_{k=1}^L e^{i (\phi_l - \phi_k) s(Q)} e^{-i \phi_k x} \gamma_k(x)  \in L^1 + L^p
\end{equation}
At this point, notice that
\[
\sum_{Q\subset T} e^{i (\phi_l - \phi_k) s(Q)} = \sum_{Q\subset T} \prod_{q\in Q} e^{i (\phi_l- \phi_k) q} = \prod_{q\in T} (1 + e^{i (\phi_l - \phi_k) q} ) = 2^{L-1} \delta_{kl}
\]
because of the choice of $T$. Thus, \eqref{2.6} is just $\gamma_l(x) \in L^1 + L^p$, which, because $\gamma_1 \in L^\infty$, implies by Lemma~\ref{L2.3} that $\gamma_l(x) \in L^p$.
\end{proof}

\section{Pr\"ufer variables}\label{S3}

Pr\" ufer variables were first introduced by Pr\"ufer \cite{Prufer26}. They are a tool for analyzing real-valued solutions of
\begin{equation}\label{3.1}
- u''(x) + V(x) u(x) = E u(x)
\end{equation}
and have found extensive use in spectral theory; see e.g.\ Kiselev--Last--Simon~\cite{KiselevLastSimon98}. For
\begin{equation}
E=\frac{\eta^2}4
\end{equation}
with $\eta>0$ and a real-valued nonzero solution $u(x)$ of \eqref{3.1},
we define modified Pr\"ufer variables by
\begin{align}
u'(x) & = \tfrac 12 \eta R_\eta(x) \cos (\tfrac 12 \eta x + \theta_\eta(x)) \label{3.3} \\
u(x) & = R_\eta(x) \sin (\tfrac 12 \eta x+ \theta_\eta(x)) \label{3.4}
\end{align}
We have departed from the usual notation by parametrizing in $\eta = 2\sqrt{E}$ rather than $k=\sqrt{E}$. We have also made a non-standard modification to include $\tfrac 12 \eta x$ in the $\cos$ and $\sin$ in \eqref{3.3}, \eqref{3.4}. With this change, if $V=0$ in some interval, then $\theta_\eta$ is constant in that interval by \eqref{3.5} below.

The $2\pi$ ambiguity in $\theta_\eta(x)$ is partly fixed by making $\theta_\eta(x)$ continuous in $x$; there is still a $2\pi$ ambiguity in $\theta_\eta(0)$, which won't matter to us. Substituting into \eqref{3.1}, we obtain a system of first-order differential equations for $\log R_\eta$ and $\theta_\eta$, which we'll write in terms of complex exponentials
\begin{align}
\frac{d\theta_\eta}{dx} & =  \frac{V(x)}{\eta} \bigl( \tfrac 12 e^{i[\eta x + 2\theta_\eta(x)]} + \tfrac 12 e^{-i[\eta x + 2\theta_\eta(x)]} - 1 \bigr)   \label{3.5} \\
\frac{d}{dx} \log R_\eta(x) & = \Im \Bigl( \frac {V(x)}{\eta} e^{i[\eta x + 2\theta_\eta(x)]} \Bigr) \label{3.6}
\end{align}
It will also be useful to think of $\theta_\eta$ and $\log R_\eta$ as parts of one complex function $\theta_\eta+ i \log R_\eta$, whose derivative is given by
\begin{equation}\label{3.7}
\frac{d\theta_\eta}{dx} +  i \frac{d}{dx} \log R_\eta(x) = \frac {V(x)}\eta \left( e^{i[\eta x + 2\theta_\eta(x)]} - 1 \right)
\end{equation}

Note that boundedness of $R_\eta(x)$ implies boundedness of the corresponding solution of \eqref{3.1}. Our potential $V$ is in $L^1+L^p$, so its negative part is uniformly locally $L^1$; thus, by Behncke~\cite{Behncke91} and Stolz~\cite{Stolz92}, boundedness of eigenfunctions allows one to use subordinacy theory of Gilbert--Pearson~\cite{GilbertPearson87} to imply purely absolutely continuous spectrum for the corresponding energies. We summarize this as a lemma.
 
\begin{lemma}\label{L3.1}
If for $E = \eta^2/4 \in S_\ac$,  $R_\eta(x)$ is bounded as $x\to \infty$ for any initial conditions $R_\eta(0)$, $\theta_\eta(0)$, then the spectral measure $\mu_\theta$ of the operator $H_\theta$ is mutually absolutely continuous on $S_\ac$ with the Lebesgue measure.
\end{lemma}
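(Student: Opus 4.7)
The plan is to reduce the statement to an application of the subordinacy theory of Gilbert--Pearson in the form extended by Behncke and Stolz. The first step is to translate the hypothesis on $R_\eta$ into a statement about solutions of \eqref{3.1}. From \eqref{3.3}--\eqref{3.4} one has $\lvert u(x)\rvert \le R_\eta(x)$ and $\lvert u'(x)\rvert \le \tfrac{1}{2}\eta R_\eta(x)$, and the converse bounds $R_\eta(x)^2 \le u(x)^2 + (2u'(x)/\eta)^2$ hold as well. As $(R_\eta(0), \theta_\eta(0))$ ranges over $(0,\infty)\times[0,2\pi)$, the pair $(u(0), u'(0))$ ranges over $\mathbb{R}^2\setminus\{0\}$, so the hypothesis is equivalent to saying that every nonzero real solution of $-u'' + V u = E u$ is bounded on $[0,\infty)$.

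The second step is to check that the Gilbert--Pearson framework applies to our potentials. Standard subordinacy theory requires $V\in L^1_\loc$, and Behncke--Stolz's extension permits potentials whose negative part is uniformly locally $L^1$, i.e.\ $\sup_n \int_n^{n+1} V_-(x)\,dx < \infty$. Since $V\in L^1 + L^p$, writing $V = V_1 + V_p$ with $V_1 \in L^1$ and $V_p \in L^p$ and using H\"older on unit intervals, both the positive and negative parts of $V$ are uniformly locally $L^1$, so the hypotheses of \cite{Behncke91,Stolz92} are satisfied.

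The third step invokes the conclusion of subordinacy theory: on the set of energies $E$ at which no solution is subordinate at $+\infty$, the spectral measure $\mu_\theta$ is mutually absolutely continuous with Lebesgue measure (and its Radon--Nikodym derivative is locally bounded above and below by positive constants). If every nonzero solution at $E$ is bounded, then no solution can be subordinate: a subordinate solution $u_s$ would satisfy $\lVert u_s\rVert_{L^2(0,L)}/\lVert u\rVert_{L^2(0,L)}\to 0$ as $L\to\infty$ for every linearly independent $u$, which is impossible once both $u_s$ and $u$ are bounded below away from zero infinitely often (guaranteed by the constancy of the Wronskian, which forces both solutions not to decay simultaneously). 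Applying this conclusion pointwise on the set $S_\ac$ of hypothesized energies then yields mutual absolute continuity of $\mu_\theta$ with Lebesgue measure on $S_\ac$, which is the claim.

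The only real work lies in ensuring the Behncke--Stolz extension applies to the class $L^1+L^p$, which is immediate from H\"older's inequality on unit intervals; everything else is quotation of established results, which is why the author presents the lemma as a ``summary.''
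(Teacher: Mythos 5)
Your proposal matches the paper's treatment: the paper gives no separate proof for Lemma~\ref{L3.1} but justifies it in the preceding paragraph by exactly the same chain of citations (boundedness of $R_\eta$ implies boundedness of all solutions, the $L^1+L^p$ condition makes $V_-$ uniformly locally $L^1$, hence the Behncke--Stolz extension of Gilbert--Pearson subordinacy theory applies, and absence of subordinate solutions gives mutual absolute continuity). Your only slight imprecision is in the final parenthetical: the Wronskian argument bounds the product of the two Pr\"ufer radii below (since $|W|\le\tfrac{\eta}{2}R_uR_{u_s}$), so it is the pairs $(u,u')$ and $(u_s,u_s')$ rather than the solutions alone that stay away from zero, but this is exactly the content of the cited references and does not affect the conclusion.
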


\section{Proof of Theorem 1.1}\label{S4}

In this section, we prove Theorem~\ref{T1.1}, except for some technical calculations deferred to the next section.
 
For a given set of phases $A$,  we define sets $A_p$ for $p\in \mathbb{N}$ by
\begin{equation}  \label{4.1}
A_p =\{0\} \cup  \bigcup_{k=1}^{p-1} (\underbrace{A+\dots+A}_{k\text{ times}})
\end{equation}
Since $A=-A$, the set $A_p$ contains all elements of
\[
(\underbrace{A+\dots+A}_{i\text{ times}}) - (\underbrace{A+\dots+A}_{j\text{ times}})
\]
for any $i\ge 1$, $j\ge 0$  and $i+j<p$.

\begin{defn}
Let $B\subset (0,+\infty)$ be a finite set. We define a binary relation $\rel{B}$ on the set of functions parametrized by $\eta\in (0,+\infty)$ by: $v_\eta(x) \rel{B}  w_\eta(x)$ if and only if
\begin{equation*}
\lim_{M\to +\infty} \int_0^M \bigl(v_\eta(x)-  w_\eta(x) \bigr) \, dx
\end{equation*}
converges uniformly (but not necessarily absolutely) in $\eta\in I$ for compact intervals $I\subset (0,+\infty)$ with $\dist(I,B)>0$.
\end{defn}

With this notation, if we are in the $L^p$ case, our goal will be to show that for any initial condition $R_\eta(0) = R^{(0)}>0$, $\theta_\eta(0) = \theta^{(0)} \in \mathbb{R}$, 
\begin{align}\label{4.2}
\frac d{dx} \log R_\eta(x) & \rel{A_p}  0
\end{align}
This implies boundedness of eigenfunctions with $E=\frac{\eta^2}4$, $\eta \notin A_p$, so by Lemma~\ref{L3.1}, it implies purely absolutely continuous spectrum on
\[
(0,\infty) \setminus \Bigl\{ \frac{\eta^2}4 \Bigm\vert \eta \in A_p \Bigr\}.
\]

The fact that convergence is uniform in $\eta$ is actually not needed, but will come automatically with the proof. Even more, the proof below actually shows that convergence is uniform in the initial condition $\theta^{(0)}$ as well.

In proving \eqref{4.2}, we will rely on the two recurrence equations \eqref{3.5}, \eqref{3.6}. 
Since $V$ has the decomposition \eqref{1.3}, by Lemma~\ref{L2.2} we assume that $\beta_l \in C^1$ and $\frac{d}{dx} \bigl( e^{i\phi_l x} \beta_l(x) \bigr) \in L^1$.

Starting from \eqref{3.6} and seeking to prove \eqref{4.2}, we are motivated to find a way to control expressions of the form $f(\eta) \Gamma(x) e^{i[\eta x + 2 \theta_\eta(x)]}$. The following lemma will give us a way of passing from expressions of the form $f(\eta) \Gamma(x) e^{i k [\eta x + 2 \theta_\eta(x)]}$, $k\in\mathbb{Z}$, to expressions with faster decay at infinity, but at the cost of a multiplicative factor with a possible singularity in $\eta$. These singularities will correspond to elements of $A_p$, which our method will have to avoid.

The idea behind this lemma is that for $\eta$ away from $\phi$, the exponential factor $e^{i \phi x}$ in this function helps average out parts of it when integrals are taken; this averaging is controlled by an integration by parts.

\begin{lemma}\label{L4.1}
Let $k\in\mathbb{Z}$ and $\phi\in\mathbb{R}$, with $k$ and $\phi$ not both equal to $0$. Let $B\subset \mathbb{R}$ be a finite set and $f\colon (0,+\infty) \setminus B \to \mathbb{C}$ be a continuous function such that 
\begin{equation}\label{4.3}
g(\eta)= - 2 k \frac{f(\eta)}{k \eta - \phi}
\end{equation}
is also continuous on $(0,+\infty)  \setminus B$ (removable singularities in $g$ are allowed).
\begin{enumerate}[(i)]
\item  If $\Gamma\in L^1(0,\infty)$, then
\begin{equation}
f(\eta) \Gamma(x) e^{k i[\eta x + 2\theta_\eta(x)]}  \rel{B}  0
\end{equation}
\item
 If $\Gamma\in C^1(0,\infty)$, $\frac{d}{dx} \bigl( e^{i\phi x} \Gamma(x) \bigr) \in L^1(0,\infty)$ and $\lim\limits_{x\to\infty} \Gamma(x) = 0$, then
\begin{equation}\label{4.5}
f(\eta) \Gamma(x)  e^{k i[\eta x + 2\theta_\eta(x)]}  \rel{B}  g(\eta) \Gamma(x)   e^{k i[\eta x + 2\theta_\eta(x)]} \frac{d\theta_\eta}{dx}
\end{equation}
\end{enumerate}
\end{lemma}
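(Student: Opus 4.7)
Part (i) is essentially immediate. Since $|e^{ki[\eta x + 2\theta_\eta(x)]}|=1$, on any compact $I\subset (0,+\infty)\setminus B$ the integrand is pointwise dominated by $\sup_{\eta\in I}|f(\eta)|\cdot|\Gamma(x)|\in L^1(0,\infty)$, an $\eta$-independent integrable majorant. Hence $\int_0^M f(\eta)\Gamma(x)e^{ki[\eta x+2\theta_\eta(x)]}\,dx$ converges as $M\to\infty$ absolutely and uniformly in $\eta\in I$, which is the $\rel{B}0$ conclusion.

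For part (ii) the plan is integration by parts in $x$. I would factor
\[
\Gamma(x)\,e^{ki[\eta x+2\theta_\eta(x)]} = \bigl(e^{i\phi x}\Gamma(x)\bigr)\,e^{2ki\theta_\eta(x)}\cdot e^{i(k\eta-\phi)x}
\]
and use $e^{i(k\eta-\phi)x}=\tfrac{1}{i(k\eta-\phi)}\tfrac{d}{dx}e^{i(k\eta-\phi)x}$ to absorb the fast-oscillating factor via an antiderivative. Integrating by parts on $[0,M]$, the $x=M$ endpoint carries the factor $\Gamma(M)\to 0$ and so vanishes uniformly in $\eta\in I$; the $x=0$ endpoint is independent of $M$ and bounded uniformly in $\eta$. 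Applying the product rule to $(e^{i\phi x}\Gamma(x))e^{2ki\theta_\eta(x)}$ splits the remaining interior integral into two pieces. The first carries $\tfrac{d}{dx}(e^{i\phi x}\Gamma(x))\in L^1$, and after pulling the $e^{-i\phi x}$ back through $e^{i(k\eta-\phi)x}$ it takes the form of part (i) with the prefactor $f(\eta)/[i(k\eta-\phi)]$ in place of $f(\eta)$; this prefactor is continuous on $(0,+\infty)\setminus B$ by the hypothesis on $g$, so part (i) applies and this term is $\rel{B}0$. The second piece contains $\theta_\eta'(x)$ and, via the algebraic identity $2ki\cdot f(\eta)/[i(k\eta-\phi)] = -g(\eta)$, reassembles exactly as $g(\eta)\Gamma(x)e^{ki[\eta x+2\theta_\eta(x)]}\theta_\eta'(x)$, the right-hand side of \eqref{4.5}. (The sign is correct because integration by parts carries the interior term with a minus sign.)

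The main obstacle, though modest, is the bookkeeping of uniformity in $\eta$ through the division by $k\eta-\phi$: the hypothesis that $g(\eta)=-2kf(\eta)/(k\eta-\phi)$ extends continuously to $(0,+\infty)\setminus B$ is precisely what guarantees that $f(\eta)/(k\eta-\phi)$ stays uniformly bounded on compact intervals $I$ with $\dist(I,B)>0$, even when $\phi/k\in I$ (in which case $f$ must have a matching zero at $\phi/k$). Once this is in hand, the $x=M$ boundary term and the piece-(a) integral are each controlled uniformly in $\eta\in I$, and the $x=0$ boundary term, being independent of $M$, drops out of the uniform convergence statement. The resulting conclusion is exactly $\rel{B}$-equivalence of the two sides of \eqref{4.5}.
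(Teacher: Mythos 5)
Your proof is correct and follows essentially the same route as the paper. The paper writes out $\frac{d}{dx}\bigl[h(\eta)\,\gamma(x)\,e^{i[(k\eta-\phi)x + 2k\theta_\eta(x)]}\bigr]$ with $\gamma = e^{i\phi x}\Gamma$ and $h(\eta) = f(\eta)/(k\eta-\phi)$ via the product rule and then integrates; you perform the equivalent integration by parts directly, split off the same $\gamma'\in L^1$ term, and use the continuity hypothesis on $g$ in the same way to control uniformity in $\eta$ near potential zeros of $k\eta-\phi$. The algebra reassembling the $\theta_\eta'$ piece into $g(\eta)\Gamma(x)e^{ki[\eta x+2\theta_\eta(x)]}\theta_\eta'$ and your sign check match the paper's identities $f=(k\eta-\phi)h$, $g=-2kh$.
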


It might seem extraneous to explicitly require that both $f$ and $g$ be continuous; however, we want the lemma to cover both the case $k\neq 0$, when $f$ can be computed from \eqref{4.3} and is continuous if $g$ is, and the case $k=0$, $\phi\neq 0$, when $g\equiv 0$ and we want to allow $f$ to be any continuous function.

\begin{proof} (i) Since $\lvert e^{k i[\eta x + 2\theta_\eta(x)]} \rvert = 1$,
\[
\lim_{M\to \infty} \int_0^M f(\eta) \Gamma(x) e^{k i[\eta x + 2\theta_\eta(x)]} dx
\]
exists by dominated convergence and convergence is uniform since $f$ is bounded on compact subsets of $(0,+\infty)\setminus B$.

(ii) Let $\gamma(x) = e^{i\phi x} \Gamma(x)$ and $h(\eta) = f(\eta) / (k \eta - \phi)$. By the product rule,
\begin{align}
\frac d{dx} \Bigl[ h(\eta) \gamma(x)  e^{i[(k\eta - \phi) x + 2k \theta_\eta(x)]} \Bigr] & = h(\eta) \gamma'(x) e^{i[(k\eta - \phi) x + 2k \theta_\eta(x)]} \nonumber  \\
& \qquad\qquad + i h(\eta) \gamma(x) e^{i[(k\eta - \phi) x + 2k \theta_\eta(x)]} \Bigl[ k \eta - \phi + 2 k \frac{d\theta_\eta}{dx} \Bigr] \label{4.6}
\end{align}
Note that $h$ is continuous on $(0,+\infty)\setminus B$, by continuity of $g$ for $k\neq 0$ and by continuity of $f$ for $k=0$ and $\phi\neq 0$. Thus, $h$ is bounded on compact subsets of $(0,+\infty) \setminus B$ and together with $\lim\limits_{x\to \infty} \gamma(x) = 0$, this implies that $h(\eta) \gamma(x)  e^{i[(k\eta - \phi) x + 2k \theta_\eta(x)]}$ converges to $0$ uniformly in $\eta$ away from $B$ as $x\to \infty$.

Boundedness of $h$ away from $B$ together with $\gamma' \in L^1(0,\infty)$ implies
\[
h(\eta) \gamma'(x) e^{i[(k\eta - \phi) x + 2k \theta_\eta(x)]} \rel{B} 0
\]
Thus, taking the integral $\int_0^M dx$ of \eqref{4.6} and taking the limit as $M\to\infty$ gives
\[
h(\eta) \gamma(x) e^{i[(k\eta - \phi) x + 2k \theta_\eta(x)]} \Bigl[ k \eta - \phi + 2 k \frac{d\theta}{dx} \Bigr] \rel{B} 0
\]
which can be rewritten as \eqref{4.5} since $f(\eta) = (k\eta - \phi) h(\eta)$ and $g(\eta) = -2k h(\eta)$.
\end{proof}

To prove Theorem~\ref{T1.1}, we will need to apply Lemma~\ref{L4.1} iteratively, starting from \eqref{3.7}. After every application of Lemma~\ref{L4.1}, all the terms containing $W$ will be in $L^1$ and so $\rel{A_p} 0$, and we will be left with terms with products of $\beta_k$'s, with one more $\beta_k$ than we started with. We will repeat this procedure until we have products of $p$ of the $\beta_k$'s, at which point the $L^p$ condition completes the proof. Using also the form of \eqref{3.5}, we notice that we will only have terms of the form
\begin{equation}\label{4.7}
f_{I,K}(\eta;\phi_{j_1},\dots,\phi_{j_I}) \beta_{j_1}(x) \dots \beta_{j_I}(x) e^{iK[\eta x + 2\theta_\eta(x)]}
\end{equation}
with $I\ge 1$, $0\le K \le I$. Since terms of this form will occur with all permutations of $j_1,\dots, j_I$, we can agree to average in all of those terms, so that $f_{I,K}$ will be symmetric in $\phi_{j_1},\dots,\phi_{j_I}$.

When we apply Lemma~\ref{L4.1}(ii) to such a term, the appropriate $g_{I,K}$ will be
\begin{align}
g_{I,K}(\eta;\{\phi_i\}_{i=1}^I) & = - \frac{ 2 K  }{K \eta - \sum_{i=1}^I \phi_i } f_{I,K}(\eta;\{\phi_i\}_{i=1}^I) \label{4.8}
\end{align}
By \eqref{3.7}, we will start from
\[
 \frac {V(x)}\eta \left( e^{i[\eta x + 2\theta_\eta(x)]} - 1 \right)
\]
from which we read off the values of $f_{I,K}$ for $I=1$,
\begin{equation}\label{4.9}
f_{1,0}(\eta;\phi_1) = - \frac 1\eta, \qquad f_{1,1}(\eta;\phi_1) = \frac 1\eta
\end{equation}
By writing out which $g_{I-1,k}$ affect $f_{I,K}$ and remembering our convention to symmetrize in the $\phi_j$, we obtain a recurrence relation in $f_{I,K}$ and $g_{I,K}$,
\begin{align}
f_{I,K}(\eta;\{\phi_i\}_{i=1}^I)  & =  \frac 1\eta \sum_{k=K-1}^{K+1} \sum_{\sigma\in S_I} \frac 1{I!} \omega_{K-k} g_{I-1,k}(\eta;\{\phi_{\sigma(i)}\}_{i=1}^{I-1}), \quad I\ge 2 \label{4.10}	
\end{align}
where $\omega_a = \begin{cases} \tfrac 12 & a = \pm 1 \\ -1 & a =0 \end{cases}$.

There is one issue we haven't yet addressed: Lemma~\ref{L4.1}(ii) only applies when $k$ and $\phi$ aren't both equal to $0$. In our notation, this issue arises for terms
\[
f_{I,0}(\eta;\phi_{j_1},\dots,\phi_{j_I}) \beta_{j_1}(x) \dots \beta_{j_I}(x)
\]
with $\phi_{j_1} + \dots + \phi_{j_I}  = 0$. We will need a separate argument to eliminate these terms, and this will come from a symmetry property of $f_{I,0}$ proved in the next section.

Finally, we wish to prove that all iterations of Lemma~\ref{L4.1}(ii) can be performed with $B=A_p$, and for that we need to be able to control the singularities of $g_{I,K}$. This will come from a functional identity in terms of the $g_{I,K}$, also proved in the next section.

We will now present the proof, up to those technical calculations deferred to the next section.

\begin{proof}[Proof of Theorem~\ref{T1.1}]
As described above, we want to control $\frac d{dx} \log R_\eta(x)$ by means of an iterative process, and because of \eqref{3.7}, we start with
\begin{equation}\label{4.11}
 \frac {V(x)}\eta \left( e^{i[\eta x + 2\theta_\eta(x)]} - 1 \right)
 \end{equation}
which is a finite sum of terms of the form
\begin{equation}\label{4.12}
f_{I,K}(\eta;\phi_{j_1},\dots,\phi_{j_I}) \beta_{j_1}(x) \dots \beta_{j_I}(x) e^{i[K\eta  x + 2K\theta_\eta(x)]}
\end{equation}
(in fact, initially, only terms with $I=1$ are present). We then use Lemma~\ref{L4.1}(ii) to replace terms \eqref{4.12} by finite sums of terms of the same form, but with a greater value of $I$. We proceed with this process until we get terms with $I\ge p$; and by Lemma~\ref{L5.1}(iv), all terms with $I< p$ will have their corresponding $g_{I,K}$ continuous (and thus bounded) away from the set $A_p$. Terms \eqref{4.12} with $I\ge p$ are in $L^1$, so they are negligible in the relation $\rel{A_p}$.

Thus, the only terms we will be left with are the ones for which Lemma~\ref{L4.1}(ii) does not apply. These are terms with $K=0$ and $\phi_{j_1}+\dots+\phi_{j_I}=0$. However, for any such term
\begin{equation}\label{4.13}
f_{I,0}(\eta;\phi_{j_1},\dots,\phi_{j_I}) \beta_{j_1}(x) \dots \beta_{j_I}(x)
\end{equation}
in the sum, there is a corresponding term
\begin{equation}\label{4.14}
f_{I,0}(\eta;-\phi_{j_1},\dots,-\phi_{j_I}) \bar\beta_{j_1}(x) \dots \bar\beta_{j_I}(x)
\end{equation}
because we have chosen a decomposition \eqref{1.3} of $V$ such that for every $\beta_i$, there is a $\bar \beta_i$ in the decomposition. However, by Lemma~\ref{L5.1}(ii), the sum of \eqref{4.13} and \eqref{4.14} is purely real! Thus, when we take the imaginary part of \eqref{4.11}, by \eqref{3.7} we get
\[
\frac d{dx} \log R_\eta(x) \rel{A_p} 0
\]
which completes the proof.
\end{proof}

\section{Some functional identities}\label{S5}

In this section, we will establish some properties of the functions $f_{I,K}$ and $g_{I,K}$, which are used in the proof of Theorem~\ref{T1.1} to restrict the set of their nonremovable singularities and to prove the vanishing of terms which Lemma~\ref{L4.1} isn't able to handle.

We begin by establishing the notation. We will be dealing with functions of $1+n$ variables, where the first variable will be $\eta$ and the remaining $n$ will be phases. In applications these will be some of the phases of generalized bounded variation, but in this section we think of them merely as parameters of certain functions. We need a kind of symmetrized product for such functions:

\begin{defn}
For a function $p_{I}$ of $1+I$ variables and a function $q_{J}$ of $1+J$ variables, we define their symmetric product as a function $p_{I} \odot q_{J}$ of $1+(I+J)$ variables by
\begin{align*}
 (p_{I}\odot q_{J}) \bigl(\eta; \{\phi_i\}_{i=1}^{I+J} \bigr) & = \frac{1}{(I+J)!} \sum_{\sigma\in S_{I+J} } 
  p_{I} \bigl(\eta; \{\phi_{\sigma(i)}\}_{i=1}^{I} \bigr)  q_{J} \bigl(\eta; \{\phi_{\sigma(i)}\}_{i=I+1}^{I+J} \bigr) \end{align*}
 where $S_{I+J}$ is the symmetric group in $I+J$ elements.
\end{defn}

It is straightforward to see that $\odot$ is commutative and associative. We will also have a use for some auxiliary functions. Let $\Omega_a$, with $a\in\mathbb{Z}$, be a function of $1+1$ variables and let $\Xi_{I,K}$, for $0\le K\le I$, be a function of $1+I$ variables,
\begin{align}
\Omega_a(\eta;\phi_1) & = \begin{cases} 2 & a=0 \\
1 & a = \pm 1 \\
0 & \lvert a\rvert \ge 2
\end{cases} \\
\Xi_{I,K}(\eta;\{\phi_i\}_{i=1}^I) & = \begin{cases} 1 & I=1 \\ 0 & I\ge 2\end{cases}
\end{align}
These functions are, of course, constant but defining them as functions will be convenient for use with the symmetric product. We also introduce rescaled versions of the $f_{I,K}$ and $g_{I,K}$; for $I\ge 1$ and $0\le K \le I$, let
\begin{align}
F_{I,K} & = (-1)^{K-1} \eta^I  f_{I,K} \label{5.3} \\
G_{I,K} & = \frac{(-1)^{K}}2 \eta^I g_{I,K} \label{5.4}
\end{align}
We will also take the convention
\begin{equation}
F_{0,0} = G_{0,0} = 0
\end{equation}
Rescaling \eqref{4.8}, \eqref{4.9} and \eqref{4.10} gives
\begin{align}
F_{I,K} & = \Xi_{I,K} + \sum_{a=-1}^1 \Omega_a \odot G_{I-1,K+a} \label{5.6}  \\
G_{I,K}(\eta;\{\phi_i\}_{i=1}^I) & = \frac{ K }{K \eta - \sum_{i=1}^I \phi_i} F_{I,K}(\eta;\{\phi_i\}_{i=1}^I) \label{5.7}
\end{align}

Note that $F_{I,K}$ and $G_{I,K}$ have singularities, so we must be cautious when performing arithmetic with them. Note, however, that \eqref{5.6} and \eqref{5.7} define functions for complex values of all parameters, and that these functions are meromorphic in all parameters. Moreover, by \eqref{5.6} and \eqref{5.7}, $F_{I,K}$ and $G_{I,K}$ can only have singularities for parameters $\eta, \{\phi_i\}_{i=1}^I$ such that $k\eta = \sum_{i\in A} \phi_i$ for some $0< k<K$ and some $A\subset \{1,\dots,I\}$, which is only a finite set of hyperplanes in  $\mathbb{C}^{1+I}$. Thus, when proving identities like the ones that follow, we can perform the calculations for the case when all quantities are finite, and then extend by meromorphicity. We will do this without further explanation.

\begin{lemma}\phantomsection \label{L5.1}
\begin{enumerate}[\rm (i)]
\item For $0 \le K \le I$ and $0<k<K$, the identities
\begin{align}
F_{I,K} & = \sum_{i=0}^I F_{i,k} \odot G_{I-i,K-k} \label{5.8} \\
G_{I,K} & = \sum_{i=0}^I G_{i,k} \odot G_{I-i,K-k} \label{5.9}
\end{align}
hold for all values of parameters for which all terms occurring in both sides are finite; if seen as  equalities involving meromorphic functions, they hold identically.

\item If 
\begin{equation} \label{5.10}
\phi_1+\dots + \phi_I = 0
\end{equation}
then
\begin{equation} \label{5.11}
F_{I,0} (\eta, \phi_1,\dotsc, \phi_I) = F_{I,0} (\eta, -\phi_1,\dotsc, -\phi_I)
\end{equation}

\item Nonremovable singularities of $F_{I,K}$ and $f_{I,K}$ for $\eta>0$ are of the form
\begin{equation}\label{5.12}
\eta = \sum_{a=1}^b \phi_{m_a}
\end{equation}
with $b<I$.

\item Nonremovable singularities of $G_{I,K}$ and $g_{I,K}$ for $\eta>0$ are of the form \eqref{5.12} with $b\le I$.
\end{enumerate}
\end{lemma}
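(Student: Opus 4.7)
The plan is to prove the four parts in the order (i), then (iii) and (iv) jointly, then (ii), so that each part relies only on earlier ones.

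For (i), I would proceed by induction on $I$. The base case $I=0$ is trivial from $F_{0,0}=G_{0,0}=0$. For the inductive step on \eqref{5.8}, I expand $F_{I,K}$ using \eqref{5.6}, apply the inductive hypothesis to each $G_{I-1,K+a}$ that appears, and reorganize using the commutativity and associativity of $\odot$. The identity \eqref{5.9} can then be derived from \eqref{5.8} by multiplying through by $K/(K\eta-\sum_i\phi_i)$ and using \eqref{5.7} to convert the left-hand $F_{I,K}$ and each right-hand $F_{i,k}$ into $G$'s. The passage from pointwise to meromorphic identities is automatic by analytic continuation.

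For (iii) and (iv), I would use a coupled induction on $I$. Part (iii) is immediate from \eqref{5.6}: the $\Omega_a$'s are constant, and by (iv) at level $I-1$ any singularity of $G_{I-1,K+a}$ has the form \eqref{5.12} with $b\le I-1<I$. For (iv), \eqref{5.7} shows the singularities of $G_{I,K}$ come either from $F_{I,K}$ (which have $b<I$ by (iii), just proved at level $I$) or from the new factor $K/(K\eta-\sum_i\phi_i)$. When $K=1$ this gives a pole at $\eta=\sum_i\phi_i$, of the form \eqref{5.12} with $b=I$. When $K\ge 2$, the locus $K\eta=\sum_i\phi_i$ is not of the form \eqref{5.12}, so the apparent pole there must be removable; equivalently, $F_{I,K}$ must vanish on it. Here I invoke (i) in the form \eqref{5.9} with $k=1$: the summands $G_{i,1}\odot G_{I-i,K-1}$ have polar loci only of the form $\eta=\sum_{j\in T}\phi_j$ for subsets $T\subset\{1,\dots,I\}$ (by the inductive hypothesis for (iv) applied to the factors, noting that the apparent pole at $(K-1)\eta=\sum_{j\in T^c}\phi_j$ in $G_{I-i,K-1}$ is itself removable when $K-1\ge 2$). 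For generic $\phi$ none of these agrees with the hyperplane $K\eta=\sum_i\phi_i$; hence $G_{I,K}$ is regular there, forcing $F_{I,K}$ to vanish.

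Part (ii) is the main obstacle. For $I\ge 2$, since $\Xi_{I,0}=0$ and $G_{I-1,0}=0$, the recurrence \eqref{5.6} collapses to $F_{I,0}=\Omega_1\odot G_{I-1,1}$, giving
\[
F_{I,0}(\eta;\phi_1,\dots,\phi_I)=\frac{1}{I!}\sum_{\sigma\in S_I}\frac{F_{I-1,1}(\eta;\phi_{\sigma(2)},\dots,\phi_{\sigma(I)})}{\eta-\sum_{j=2}^I\phi_{\sigma(j)}}.
\]
Under the constraint $\sum_i\phi_i=0$ the denominator becomes $\eta+\phi_{\sigma(1)}$, and \eqref{5.11} reduces to a symmetry of this sum under $\phi\to-\phi$. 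My plan is to close this by induction on $I$, but the inductive step requires simultaneously controlling $F_{I-1,1}(\eta;-\phi)$ versus $F_{I-1,1}(\eta;\phi)$ on the appropriate sum-constrained slice. To make the induction close, I would formulate a more general family of sign-flip symmetry statements for the $F_{I,K}$ restricted to suitable affine slices of the phase space, prove them jointly from the recurrences \eqref{5.6} and \eqref{5.7}, and recover \eqref{5.11} as the $K=0$ specialization. Direct computation in the small cases $I=2,3$ shows the invariance arising from a relabeling $i\leftrightarrow j$ of summation indices in a double sum of the form $\sum_{i\ne j}(\eta+\phi_i)^{-1}(\eta-\phi_j)^{-1}(\dots)$, which I believe is the mechanism in general; identifying the correct stable family of slice-symmetries is the hardest step of the whole lemma.
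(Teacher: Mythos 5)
Your plan for parts (i), (iii) and (iv) is essentially the paper's, with one descriptive flaw and one genuine gap that I'll take in order.

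In (i), the inductive argument for \eqref{5.8} (expand via \eqref{5.6}, apply the level-$(I-1)$ instance of \eqref{5.9}, handle the edge case $k+a\le 0$ which kills $G_{i-1,0}$, and collect the $\Xi$ corrections) matches the paper. But your stated derivation of \eqref{5.9} from \eqref{5.8} --- ``multiplying through by $K/(K\eta-\sum_i\phi_i)$ and using \eqref{5.7} to convert the left-hand $F_{I,K}$ and each right-hand $F_{i,k}$ into $G$'s'' --- does not work as written: the factor that converts $F_{i,k}(\eta;\{\phi_{\sigma(j)}\}_{j=1}^i)$ into $G_{i,k}$ is $k/(k\eta-\sum_{j\le i}\phi_{\sigma(j)})$, which depends on $\sigma$ and $i$ and is not the global constant $K/(K\eta-\sum_i\phi_i)$. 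The paper's actual mechanism is the additivity
\[
K\eta - \sum_{j=1}^I\phi_j = \Bigl(k\eta-\sum_{j\le i}\phi_{\sigma(j)}\Bigr) + \Bigl((K-k)\eta-\sum_{j>i}\phi_{\sigma(j)}\Bigr),
\]
rewritten via \eqref{5.7} as $\frac{KF_{I,K}}{G_{I,K}} = \frac{kF_{i,k}}{G_{i,k}} + \frac{(K-k)F_{I-i,K-k}}{G_{I-i,K-k}}$ on each piece; multiply by $G_{i,k}G_{I-i,K-k}$, average over $\sigma$, sum over $i$, and invoke \eqref{5.8} with both parameters $k$ and $K-k$ to see the right side collapse to $KF_{I,K}$. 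You will want to fill in that step.

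Your (iii)/(iv) argument is correct and close to the paper's. The paper iterates \eqref{5.9} all $K-1$ times to reduce to $K$-fold products of $G_{i,1}$ and then reads off poles from \eqref{5.7} plus (iii); you instead apply \eqref{5.9} once with $k=1$ and invoke the inductive hypothesis (iv) on both factors. Both organizations close, and both correctly use (iii) at level $I$ (which must be established before (iv) at level $I$).

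The genuine gap is (ii), which you acknowledge. Your observation from the $I=2,3$ cases --- an $i\leftrightarrow j$ relabeling symmetry in a double sum --- is the right mechanism, but the ``stable family of slice-symmetries'' you would need is exactly what the paper avoids by first unrolling \eqref{5.6}/\eqref{5.7} into a closed-form expansion. Concretely, the paper proves
\[
F_{I,0} = \frac{1}{I!}\sum_{\sigma\in S_I}\;\sum_{\overrightarrow{k}\in A_I}\;\prod_{i=0}^{I-1}\bigl(2-\lvert k_{i+1}-k_i\rvert\bigr)\;\prod_{i=1}^{I-1}\frac{k_i}{k_i\eta-\sum_{a=1}^i\phi_{\sigma(a)}},
\]
where $A_I$ consists of integer sequences $(k_0,\dots,k_I)$ with $k_0=k_I=0$, $k_i\ge 1$ for $0<i<I$, and $\lvert k_{i+1}-k_i\rvert\le 1$. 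The symmetry \eqref{5.11} is then a termwise identity under the involution that reverses the sequence, $\overrightarrow{k}'=(k_I,\dots,k_0)$, and reverses the permutation, $\sigma'(j)=I+1-\sigma(I+1-j)$: under the constraint $\phi_1+\dots+\phi_I=0$, each partial sum $\sum_{a\le i}\phi_{\sigma'(a)}$ equals $-\sum_{a\le I-i}\phi_{\sigma(a)}$, so the $i$-th factor for $(\overrightarrow{k}',\sigma')$ with $-\phi$ equals the $(I-i)$-th factor for $(\overrightarrow{k},\sigma)$ with $\phi$. This closed form plus time-reversal is the missing idea; without it, an induction on $I$ through $F_{I,0}=\Omega_1\odot G_{I-1,1}$ keeps pushing the constraint into ever more awkward affine slices and does not obviously close.
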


\begin{proof}
(i) We prove \eqref{5.8} and \eqref{5.9} simultaneously by induction on $I$. The statement is vacuous for $I\le 1$. Assume it holds for $I-1$. Then by \eqref{5.6},
\begin{align*}
 \sum_{i=0}^I F_{i,k} \odot G_{I-i,K-k} & =  \sum_{i=0}^I ( \Xi_{i,k} + \sum_{a=-1}^1 \Omega_a \odot G_{i-1,k+a}) \odot G_{I-i,K-k}
\end{align*}
Using the inductive assumption, we may apply \eqref{5.9} to the sums of $G\odot G$, unless $k+a\le 0$. But $k+a\le 0$ holds only for $k=1$, $a=-1$, and in this exceptional case $G_{i-1,k+a}=0$. Thus, 
\begin{align*}
 \sum_{i=0}^I F_{i,k} \odot G_{I-i,K-k} & =  \sum_{i=0}^I  \Xi_{i,k} \odot G_{I-i,K-k} + \sum_{a=-1}^1 \sum_{i=0}^I \Omega_a \odot G_{i-1,k+a} \odot G_{I-i,K-k} \\
 & =  \delta_{k-1} \Xi_{1,1} \odot G_{I-1,K-1} + \sum_{a=-1}^1 \Omega_a \odot ( G_{I-1,K+a} - \delta_{a+1} \delta_{k-1} G_{I-1,K-1} ) \\
 & = \delta_{k-1} \Xi_{1,1} \odot G_{I-1,K-1} + F_{I,K} - \Xi_{I,K} - \Omega_{-1} \delta_{k-1} G_{I-1,K-1} \\
  & = F_{I,K}
\end{align*}
where we used \eqref{5.6} in the third line and $\Xi_{1,1} = \Omega_{-1}$ and $\Xi_{I,K}=0$ (since $I\ge 2$) in the fourth. We have thus proved part of the inductive step, proving that \eqref{5.8} holds for our value of $I$. It remains to prove \eqref{5.9}.

By \eqref{5.7}, for any permutation $\sigma\in S_I$ we have
\[
\frac{K F_{I,K}(\eta;\{\phi_{\sigma(j)}\}_{j=1}^I)}{G_{I,K}(\eta;\{\phi_{\sigma(j)}\}_{j=1}^I)} = \frac{k F_{i,k}(\eta;\{\phi_{\sigma(j)}\}_{j=1}^i)}{G_{i,k}(\eta;\{\phi_{\sigma(j)}\}_{j=1}^i)} + \frac{(K-k) F_{I-i,K-k}(\eta;\{\phi_{\sigma(j)}\}_{j=i+1}^I)}{G_{I-i,K-k}(\eta;\{\phi_{\sigma(j)}\}_{j=i+1}^I)}
\]
Multiplying this by $G_{i,k}(\eta;\{\phi_{\sigma(j)}\}_{j=1}^i)  G_{I-i,K-k}(\eta;\{\phi_{\sigma(j)}\}_{j=i+1}^I)$, averaging in $\sigma\in S_I$ and summing in $i$ gives, by \eqref{5.8},
\[
\frac{K F_{I,K}}{G_{I,K}} \sum_{i=0}^I G_{i,k} \odot G_{I-i,K-k} = K F_{I,K}
\]
which gives \eqref{5.9}.

(ii) This identity will be obvious when written in the right way, but the notation is cumbersome. Let $A_I$ be the set of sequences $\overrightarrow{k} = (k_0, k_1, \dotsc, k_I)$ with $\lvert k_i - k_{i+1} \rvert \le 1$, $k_i\ge 1$ for $0<i<I$ and $k_0 = k_I = 0$, and let $H_{I,\overrightarrow{k}}$ be a function of $1+I$ variables given by
\begin{equation*}
H_{I,\overrightarrow{k},\sigma}(\eta;\phi_1,\dotsc,\phi_I) = \prod_{i=0}^{I-1} (2-\lvert k_{i+1}-k_i \rvert) \prod_{i=1}^{I-1}
\frac{k_i}{k_i \eta - \sum_{a=1}^i \phi_{\sigma(a)}}
\end{equation*}
This quantity is useful because, by a simple induction using \eqref{5.6} and \eqref{5.7},
\begin{equation}\label{5.13}
F_{I,0} = \frac 1{I!} \sum_{\sigma\in S_I} 
\sum_{\overrightarrow{k}\in A_I}  H_{I,\overrightarrow{k},\sigma}
\end{equation}
If $\overrightarrow{k}' = (k_I,k_{I-1},\dotsc, k_0)$ and $\sigma'$ is the ``reversed'' permutation from $\sigma$ defined by $\sigma'(j) = I+1 - \sigma(I+1-j)$, then \eqref{5.10} implies
\[
\frac{k'_i}{k'_i \eta + \sum_{j=1}^i \phi_{\sigma'(j)}}  = \frac{k_{I-i}}{k_{I-i} \eta - \sum_{j=1}^{I-i} \phi_{\sigma(j)}} 
\]
Taking the product $\prod_{i=1}^{I-1}$ of this, and similarly equating the other products, we obtain
\begin{equation*}
H_{I,\overrightarrow{k},\sigma} (\eta; \phi_1,\dotsc,\phi_I) = H_{I,\overrightarrow{k}',\sigma'}(\eta; -\phi_1,\dotsc,-\phi_I)
\end{equation*}
Summing in $\overrightarrow{k}$ and $\sigma$ and using \eqref{5.13} proves \eqref{5.11}.

(iii), (iv) We prove (iii) and (iv) simultaneously by induction on $I$.

If (iv) holds for $I<M$: by \eqref{5.6}, singularities of $F_{I,K}$ come from a $G_{I-1,k}$, so (iii) then holds for $I \le M$.

If (iii) holds for $I<M$: by applying \eqref{5.9} $K-1$ times, $G_{I,K}$ can be written as a sum of $K$-fold products of $G_{i,1}$ with $i\le I$, so all its nonremovable singularities are singularities of a $G_{i,1}$ with $i\le I$. By \eqref{5.7}, those can only be of the form \eqref{5.12} with $b=i\le I$, or coming from $f_{i,1}$, so again of that form with $b< i \le I$. Thus, (iv) holds for $I\le M$.

The statements for $f_{I,K}$ and $g_{I,K}$ follow from \eqref{5.3} and \eqref{5.4}.
\end{proof}

\section{Existence of embedded eigenvalues}\label{S6}

In previous sections, we used Lemma~\ref{L4.1} iteratively to prove $\log R_\eta \sim_{A_p} 0$, that is, to prove boundedness of solutions away from the set $A_p$. In this section, we will use the same approach at a point $\eta \in A_p$, looking for point spectrum. To establish what we are looking for, note the following simple lemma.

\begin{lemma}\label{L6.1}
Let $E>0$ and let $R(x)$, $\theta(x)$ be the Pr\"ufer variables corresponding to some solution of \eqref{3.1}, and assume that
\begin{equation}\label{6.1}
\frac d{dx} \log R(x) \sim - \frac B{x^{(p-1)\gamma}}
\end{equation}
and the limit $\theta_\infty = \lim\limits_{x\to\infty} \theta(x)$ exists. Then for some $A>0$,
\begin{equation}\label{6.2}
\frac 2\eta u'(x) + i u(x) = A f(x) e^{i [\tfrac \eta 2 x  + \theta_\infty]} (1+ o(1)), \quad x\to \infty.
\end{equation}
where
\begin{equation}
f(x) = \begin{cases}
x^{-B} & \gamma = \frac 1{p-1} \\
 \exp\left( - \frac B{1-(p-1)\gamma} x^{1-(p-1)\gamma} \right)  & \gamma \in (\frac 1p, \frac 1{p-1})
 \end{cases}
\end{equation}
\end{lemma}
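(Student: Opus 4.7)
The key observation is that the modified Pr\"ufer parametrization \eqref{3.3}--\eqref{3.4} makes the left-hand side of \eqref{6.2} transparent: a direct computation gives
\[
\frac{2}{\eta} u'(x) + i u(x) = R(x)\bigl[\cos(\tfrac{\eta}{2} x + \theta(x)) + i \sin(\tfrac{\eta}{2} x + \theta(x))\bigr] = R(x)\, e^{i[\frac{\eta}{2} x + \theta(x)]}.
\]
Since $\theta(x) \to \theta_\infty$ by hypothesis, $e^{i \theta(x)} = e^{i\theta_\infty}(1+o(1))$, so the task reduces to producing a constant $A>0$ for which $R(x) = A f(x)(1+o(1))$ as $x\to\infty$.

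For that, the plan is to compare $(\log R)'(x)$ and $(\log f)'(x)$. A direct computation in both regimes: for $\gamma = 1/(p-1)$ one has $(p-1)\gamma = 1$ and $\log f(x) = -B\log x$, giving $(\log f)'(x) = -B/x = -B/x^{(p-1)\gamma}$; for $\gamma \in (1/p,1/(p-1))$, $\log f(x) = -\frac{B}{1-(p-1)\gamma} x^{1-(p-1)\gamma}$, so differentiation yields $(\log f)'(x) = -B/x^{(p-1)\gamma}$ again. Thus in both cases $(\log f)'(x)$ agrees exactly with the prescribed asymptotic \eqref{6.1} for $(\log R)'(x)$, which is the whole reason $f$ was chosen in the form given.

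Interpreting the relation ``$\sim$'' in \eqref{6.1} in the sense of the relation $\rel{B}$ from Section~\ref{S4} (so that the improper integral of the difference converges), we get
\[
\log R(x) - \log f(x) = \int_{x_0}^{x} \Bigl[ (\log R)'(t) + \frac{B}{t^{(p-1)\gamma}} \Bigr]\, dt + \mathrm{const}
\]
which therefore has a finite limit as $x\to\infty$; call this limit $\log A$ with $A>0$. Exponentiating gives $R(x)/f(x) \to A$, and combining with the phase convergence yields \eqref{6.2}.

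The lemma is elementary once the Pr\"ufer identity is observed and $(\log f)'$ is computed; the only real content is the choice of $f$, which is engineered precisely so that $(\log f)'$ matches the hypothesized asymptotic of $(\log R)'$ in both the critical ($\gamma = 1/(p-1)$, algebraic decay of $R$) and subcritical ($\gamma < 1/(p-1)$, stretched-exponential decay of $R$) cases. The only point requiring care is that asymptotic equivalence of $(\log R)'$ and $(\log f)'$ does not by itself imply asymptotic equivalence of $R$ and $f$ up to a multiplicative constant; one genuinely needs the integrability of the difference, which is exactly what the $\rel{}$ relation supplies.
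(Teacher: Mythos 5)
Your proof is correct and follows essentially the same route as the paper: both hinge on noting that the Pr\"ufer identity collapses $\tfrac 2\eta u' + iu$ to $R(x)e^{i[\frac\eta 2 x + \theta(x)]}$, that $(\log f)'(x) = -B/x^{(p-1)\gamma}$ in both regimes, and that the $\sim$ in \eqref{6.1} (convergence of the improper integral of the difference) is precisely what gives $\log R - \log f$ a finite limit and hence $R/f \to A$. Your write-up is simply a more explicit version of the paper's three-line argument, and your closing remark that mere asymptotic equivalence of the derivatives would not suffice correctly identifies the role the integrability condition plays.
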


\begin{proof}
If we define $r(x)$ by
\[
\frac d{dx} \log r(x) =  -\frac B{x^{(p-1)\gamma}},
\]
then $r(x) = \tilde A f(x)$. By definition, $\frac d{dx} \log R(x) \sim \frac d{dx} \log r(x)$ implies that $\frac{R(x)}{r(x)}$ has a finite non-zero limit, so we obtain $R(x) = A f(x) (1+o(1))$. The rest follows from \eqref{3.3} and \eqref{3.4}.
\end{proof}

\begin{proof}[Proof of Theorem~\ref{T1.2}] Our $V$ is of the form \eqref{1.8}, but we will also use the notation from Definition~\ref{D1.1}; namely, let $\{\phi_0,\dots,\phi_L\} = \{0,\pm \alpha_1,\dots,\pm \alpha_K\}$ and let $\beta_0(x), \dots,\beta_L(x)$ be the functions $\beta_0(x)$ and $\frac{\lambda_k}{x^\gamma} e^{\pm i (\alpha_k x + \xi_k(x))}$ with $1\le k\le K$. We focus on a point
\[
E = \frac{\eta^2}4 \in S_p \setminus S_{p-1}
\]
which means that $\eta$ is of the form
\begin{equation}\label{6.4}
\eta = \phi_{j_1} + \phi_{j_2} + \dots + \phi_{j_{p-1}}
\end{equation}
and that $\eta$ can't be similarly written as a sum of less than $p-1$ terms.

Note that unless the $\phi_j$ solve one of finitely many linear equations, $\eta$ can be represented in the form \eqref{6.4} in exactly one way. We will work under this assumption from now on.

With $\eta$ given by \eqref{6.4}, we start from \eqref{3.7} and apply Lemma~\ref{L4.1} iteratively. The process will go as in the proof of Theorem~\ref{T1.1}, except for the term
\begin{equation}\label{6.5}
f_{I,1}(\eta;\phi_{j_1},\dots,\phi_{j_{p-1}}) \beta_{j_1}(x) \dots \beta_{j_{p-1}}(x) e^{i[\eta  x + 2\theta(x)]}.
\end{equation}

By \eqref{6.4}, Lemma~\ref{L4.1}(ii) is not applicable to the term \eqref{6.5}. Remember that this term appears with all permutations of the set of indices, so denoting the number of distinct permutations of $(j_1,\dots, j_{p-1})$ by $C_1$, we obtain
\begin{equation}\label{6.6}
\frac d{dx} \log R(x) \sim \Im \left( \frac \Lambda{x^{(p-1)\gamma}} e^{i[\xi(x) + 2\theta(x)]} \right)
\end{equation}
where
\begin{equation}
\Lambda = C_1 f_{I,1}(\eta;\phi_{j_1},\dots,\phi_{j_{p-1}})  \lambda_{j_1} \dots \lambda_{j_{p-1}}
\end{equation}
and $\xi(x)\in \mathbb{R}$ is given by
\begin{equation}\label{6.8}
\xi(x) = \xi_{j_1}(x) + \dots + \xi_{j_{p-1}}(x).
\end{equation}
Conversely, once we construct appropriate $\xi(x)$, we will pick $\xi_j(x)$ obeying \eqref{6.8} by taking
\begin{equation}\label{6.9}
\xi_j(x) = c_j \xi(x)
\end{equation}
for some $c_j\in \mathbb{R}$ with $c_{j_1} + \dots + c_{j_{p-1}}=1$.

Note that $f_{I,1}$ is a rational function in $\eta, \phi_{j_1},\dots,\phi_{j_{p-1}}$; moreover, $(-1)^I f_{I,1}$ is strictly positive for large enough $\eta$, which follows by induction from the defining recurrence relations \eqref{4.10} and \eqref{4.8}. Thus, $f_{I,1}$ is a non-trivial rational function. We will assume from now on that 
\[
f_{I,1}(\eta;\phi_{j_1},\dots,\phi_{j_{p-1}}) \neq 0,
\]
and therefore $\Lambda \neq 0$.
This, and our earlier decision to avoid $\phi_j$ which solve certain linear equations, is why Theorem~\ref{T1.2} holds away from an algebraic set of codimension $1$.

In addition, to use Lemma~\ref{L6.1}, we need control of the Pr\"ufer phase $\theta(x)$. To get \eqref{6.6}, we took the imaginary part of \eqref{3.7}; to obtain information about the Pr\"ufer phase, we instead take the real part of \eqref{3.7} after the iterative process, so we have
\begin{equation}\label{6.10}
\frac{d\theta}{dx} \sim \Re \left( \Omega(x) +  \frac \Lambda{x^{(p-1)\gamma}} e^{i[\xi(x) + 2\theta(x)]}  \right).
\end{equation}
Here $\Omega(x)$ is the sum of terms
\begin{equation}\label{6.11}
\Omega(x) = \sum_{I=1}^{p-1} \sum_{\phi_{j_1}+\dots+\phi_{j_I}=0}
f_{I,0}(\eta;\phi_{j_1},\dots,\phi_{j_I}) \beta_{j_1}(x) \dots \beta_{j_I}(x)
\end{equation}
which we discarded in the proof of Theorem~\ref{T1.1} because it was real-valued, but for \eqref{6.10} we have to take it into account.

This is where the choice of $\beta_0(x)$ becomes important. Note that $\Omega(x)$ is a linear combination of functions of bounded variation, so $\Omega(x)$ has bounded variation; moreover, one of the terms in \eqref{6.11} is $-\frac 1\eta \beta_0(x)$, and all other terms are at least quadratic in the $\beta$'s,
\begin{equation}\label{6.12}
\Omega(x) = - \frac 1\eta \beta_0(x) + \mathcal{L}(\beta_0)(x)
\end{equation}
with
\begin{equation}\label{6.13}
 \mathcal{L}(\beta_0)(x) =  \sum_{I=2}^{p-1} \sum_{\phi_{j_1}+\dots+\phi_{j_I}=0}
f_{I,0}(\eta;\phi_{j_1},\dots,\phi_{j_I}) \beta_{j_1}(x) \dots \beta_{j_I}(x)
\end{equation}

If $\beta_0(x)$ weren't present in \eqref{6.13}, we could simply replace it by $\tilde \beta_0(x) = \beta_0(x) + \eta \Omega(x)$ and the new $\tilde\Omega(x)$ given by \eqref{6.12} would be $0$. Since $\beta_0(x)$ is present in \eqref{6.13}, destroying $\Omega(x)$ takes a little more work. Note that a priori we know that $\Omega(x) = O(x^{-\gamma})$ and $\beta_k(x) = O(x^{-\gamma})$, $\frac d{dx} \left( e^{-i\phi_k x} \beta_k(x) \right) = O(x^{-p\gamma})$ for all $k$.

\begin{lemma}
Let $\Omega(x)$ be given by \eqref{6.12}, \eqref{6.13} and $\Omega(x) = O(x^{-n\gamma})$, with $n\ge 1$. Replacing $\beta_0(x)$ by $\tilde\beta_0(x) = \beta_0(x) + \eta \Omega(x)$ on the right hand side of \eqref{6.11} leads to $\tilde \Omega = - \frac 1\eta \tilde \beta_0 + \mathcal{L}(\tilde \beta_0)$ with $\tilde\Omega(x) = O(x^{-(n+1)\gamma})$. If $\beta_0$ obeys the conditions \eqref{1.10}, then so does $\tilde \beta_0$.
\end{lemma}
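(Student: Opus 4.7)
The plan is to exploit the algebraic structure of the map $\beta_0 \mapsto \Omega$. Substituting $\tilde\beta_0 = \beta_0 + \eta\Omega$ into the definition $\tilde\Omega = -\tfrac{1}{\eta}\tilde\beta_0 + \mathcal{L}(\tilde\beta_0)$ and using \eqref{6.12} in the equivalent form $-\tfrac{1}{\eta}\beta_0 = \Omega - \mathcal{L}(\beta_0)$, the linear part cancels exactly and one is left with
\[
\tilde\Omega = -\tfrac{1}{\eta}\beta_0 - \Omega + \mathcal{L}(\beta_0 + \eta\Omega) = \mathcal{L}(\beta_0+\eta\Omega) - \mathcal{L}(\beta_0).
\]
This identity is the heart of the argument and immediately reduces the problem to estimating a difference of values of the polynomial $\mathcal{L}$.

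To bound $\tilde\Omega$, I expand $\mathcal{L}(\beta_0+\eta\Omega) - \mathcal{L}(\beta_0)$ monomial by monomial. Each monomial in \eqref{6.13} has the form $f_{I,0}\beta_{j_1}\cdots\beta_{j_I}$ with $I\ge 2$; only the $\beta_0$-factors are affected by the substitution, and subtraction kills the pure $\beta_0$ contribution. Every surviving term therefore carries at least one factor $\eta\Omega = O(x^{-n\gamma})$, with the remaining $I-1$ factors each of size $O(x^{-\gamma})$. Since $I \ge 2$, each such term is bounded by $O(x^{-n\gamma}\cdot x^{-(I-1)\gamma}) \subset O(x^{-(n+1)\gamma})$, yielding the claimed decay.

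It remains to check that $\tilde\beta_0$ still satisfies \eqref{1.10}. The conditions $\tilde\beta_0 \in C^1$ and $\tilde\beta_0(x) = O(x^{-\gamma})$ follow directly from the same properties of $\beta_0$ and the $\beta_k$, together with $\Omega(x) = O(x^{-n\gamma}) = O(x^{-\gamma})$ (since $n\ge 1$). The only delicate point, which I expect to be the main obstacle, is the derivative bound $\tilde\beta_0'(x) = O(x^{-p\gamma})$: differentiating an individual $\beta_k$ only gives $O(x^{-\gamma})$, so cancellation inside $\mathcal{L}(\beta_0)$ is essential. The key observation is that every monomial in $\mathcal{L}$ has vanishing phase sum $\sum \phi_{j_i} = 0$, so with $\gamma_{j_a} := e^{i\phi_{j_a}x}\beta_{j_a}$ one has $\beta_{j_1}\cdots\beta_{j_I} = \gamma_{j_1}\cdots\gamma_{j_I}$, with no surviving oscillatory factor. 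From \eqref{1.10} and \eqref{1.12} together with $\gamma \in (\tfrac{1}{p},\tfrac{1}{p-1}]$ one checks that $\gamma_{j_a}'(x) = O(x^{-p\gamma})$ (the competing rates $x^{-\gamma-1}$ and $x^{-(p+1)\gamma}$ are both dominated by $x^{-p\gamma}$ in this range), so the product rule yields $(\beta_{j_1}\cdots\beta_{j_I})' = O(x^{-(p+I-1)\gamma}) \subset O(x^{-(p+1)\gamma})$ for $I\ge 2$. Hence $(\mathcal{L}(\beta_0))' = O(x^{-(p+1)\gamma})$, so $\Omega' = -\beta_0'/\eta + O(x^{-(p+1)\gamma}) = O(x^{-p\gamma})$, and finally $\tilde\beta_0' = \beta_0' + \eta\Omega' = O(x^{-p\gamma})$, as required.
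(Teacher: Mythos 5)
Your proof is correct and follows the paper's approach: the same cancellation identity $\tilde\Omega = \mathcal{L}(\tilde\beta_0) - \mathcal{L}(\beta_0)$ and the same monomial-by-monomial decay estimate using one factor of size $O(x^{-n\gamma})$ and the remaining factors of size $O(x^{-\gamma})$. Where the paper dispatches the claims about $\tilde\beta_0$ with ``follow analogously,'' you have usefully spelled out the derivative bound, correctly exploiting the vanishing phase sum in each monomial of $\mathcal{L}$ to kill the oscillatory factors and the range restriction \eqref{1.9} to bound the $x^{-\gamma-1}$ contribution by $x^{-p\gamma}$.
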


\begin{proof}
Notice that
\[
\tilde\Omega = - \frac 1\eta \beta_0 - \Omega + \mathcal{L}(\tilde\beta_0) = \mathcal{L}(\tilde\beta_0) - \mathcal{L}(\beta_0)
\]
is, by \eqref{6.13}, a linear combination of products of $\Omega$ with one or more of the $\beta$'s and $\tilde\beta_0$; thus, since $\Omega(x) = O(x^{-n\gamma})$ and $\beta_k(x) = O(x^{-\gamma})$ for all $k$, we conclude $\tilde\Omega(x) = O(x^{-(n+1)\gamma})$. The claims about $\tilde \beta$ follow analogously.
\end{proof}

By applying this lemma $p-1$ times, we get from $\Omega(x) = O(x^{-\gamma})$ to $\Omega(x) = O(x^{-p\gamma}) \in L^1$, so \eqref{6.10} becomes
\begin{equation}\label{6.14}
\frac{d\theta}{dx} \sim \Re \left(  \frac \Lambda {x^{(p-1)\gamma}}e^{i[\xi(x) + 2\theta(x)]}  \right)
\end{equation}
With \eqref{6.6} and \eqref{6.14}, we are now ready to construct $\xi(x)$ which will lead to the desired asymptotics. Denote
\[
\psi(x) = \xi(x) + 2\theta(x).
\]

\begin{lemma}\label{L6.3}
Fix $E = \frac{\eta^2}4>0$ and let $R(x)$, $\theta(x)$ be the Pr\"ufer variables corresponding to some solution of \eqref{3.1}. Assume that \eqref{6.14}, \eqref{6.6} hold. Then we may pick $\xi(x)$ with $\xi'(x) \in O(x^{-(p-1)\gamma})$ such that
\begin{equation} \label{6.15}
\lim_{x\to\infty} \psi(x) = -\frac \pi 2 - \arg \Lambda.
\end{equation}
\end{lemma}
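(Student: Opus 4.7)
My plan is to construct $\xi(x)$ and the Pr\"ufer angle $\theta(x)$ simultaneously as the solution of a coupled ODE system on $[x_0,\infty)$: $\theta$ satisfies the exact Pr\"ufer equation~\eqref{3.5} for the potential determined by $\xi$ via $\xi_k=c_k\xi$, while $\xi'$ is prescribed as a function of $(x,\xi,\theta)$ designed to reduce the asymptotic relation~\eqref{6.14} to an almost-linear scalar ODE for $\Psi(x):=\psi(x)-\psi_\infty$, where $\psi_\infty=-\tfrac{\pi}{2}-\arg\Lambda$. Since $\psi+\arg\Lambda=\Psi-\tfrac{\pi}{2}$ gives $\cos(\psi+\arg\Lambda)=\sin\Psi$, \eqref{6.14} reads
\[
\Psi'(x)\;\sim\;\xi'(x)\;+\;\frac{2|\Lambda|}{x^{(p-1)\gamma}}\sin\Psi(x).
\]

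Concretely, I would take
\[
\xi'(x)\;=\;-\frac{2|\Lambda|}{x^{(p-1)\gamma}}\sin\Psi(x)\;-\;\frac{\Psi(x)}{x^{(p-1)\gamma}},
\]
turning the reduced equation into $\Psi'(x)=-\Psi(x)/x^{(p-1)\gamma}+h(x)$, where the error $h$ is only known (from the definition of $\sim_{A_p}$) to satisfy the weaker property that $H(x):=\int_{x_0}^x h(t)\,dt$ has a finite limit $H_\infty$ as $x\to\infty$, rather than $h\in L^1$. With this choice the right-hand side for $\xi'$ is Lipschitz in $(\xi,\theta)$, so local existence of the coupled system is immediate; global existence on $[x_0,\infty)$ and the bound $\xi'(x)=O(x^{-(p-1)\gamma})$ both reduce to an a priori bound on $|\Psi|$, which will follow from the reduced scalar ODE.

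The core of the argument is to show that the solution of $\Psi'=-C(x)\Psi+h$ with $C(x):=x^{-(p-1)\gamma}$ tends to $0$. Since $(p-1)\gamma\le 1$, we have $G(x):=\int_{x_0}^x C(t)\,dt\to\infty$, so the homogeneous part $\Psi(x_0)e^{-G(x)}$ decays to $0$. For the particular solution
\[
P(x)\;=\;\int_{x_0}^x e^{-(G(x)-G(s))}h(s)\,ds,
\]
I would integrate by parts using $h=H'$ together with the elementary identity $\int_{x_0}^x C(s)e^{-(G(x)-G(s))}\,ds=1-e^{-G(x)}$, obtaining
\[
P(x)\;=\;H(x)-H_\infty\bigl(1-e^{-G(x)}\bigr)+\int_{x_0}^x C(s)e^{-(G(x)-G(s))}\bigl(H_\infty-H(s)\bigr)\,ds.
\]
The first two pieces tend to $0$ since $H(x)\to H_\infty$ and $G(x)\to\infty$. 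The remainder is made arbitrarily small by splitting at a large cutoff $X$: on $[X,x]$ the kernel $C(s)e^{-(G(x)-G(s))}$ integrates to at most $1$ while $|H_\infty-H(s)|$ is uniformly small, and on $[x_0,X]$ the factor $e^{-(G(x)-G(s))}$ tends to $0$ uniformly in $s$ as $x\to\infty$.

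The principal obstacle is precisely this last estimate: because $h$ is only conditionally integrable, a naive Gr\"onwall or dominated-convergence argument does not apply, and the integration-by-parts trick above, which exploits the convergence of $\int^\infty h$, is essential. Once $\Psi(x)\to 0$ is established, $\psi(x)\to\psi_\infty$ is immediate, and the remaining claims (global existence of the coupled ODE system and the decay rate $\xi'(x)=O(x^{-(p-1)\gamma})$) follow routinely from the boundedness of $\Psi$.
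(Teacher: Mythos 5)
Your approach is genuinely different from the paper's, and the calculations (the reduction to $\Psi'=-C(x)\Psi+h$, the variation-of-parameters formula, and the integration-by-parts argument for the conditionally convergent inhomogeneity) are correct. The paper instead chooses $\xi'(x) = -2\Re\left(\Lambda\,x^{-(p-1)\gamma}e^{i\psi(x)}\right)$, which merely cancels the leading term of $2\theta'$ so that $\psi'\sim 0$ and $\psi$ converges to \emph{some} limit depending on the free constant $\xi(x_0)$; it then proves the limit can be made equal to $-\tfrac\pi 2-\arg\Lambda$ by a topological argument (uniform closeness of $\psi_\infty$ to $\psi(x_0)$ gives a circle map with no antipodal point, hence homotopic to the identity, hence surjective). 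Your variant adds a damping term $-\Psi(x)/x^{(p-1)\gamma}$ so that the limit is forced to be $0$ directly, replacing the degree-theoretic step with an ODE estimate. That is a nice simplification in spirit, but it opens a gap the paper's choice carefully avoids.

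The gap is the circularity you dismiss in the last sentence of your third paragraph. The relation~\eqref{6.14} (and hence the asymptotic form $\Psi' = -\Psi/x^{(p-1)\gamma}+h$ with $\int h$ conditionally convergent, with the quantitative tail bound that your integration-by-parts step needs) is obtained by iterating Lemma~\ref{L4.1}, and every application of Lemma~\ref{L4.1}(ii) requires $\left|\tfrac d{dx}\left(e^{i\phi x}\Gamma(x)\right)\right|$ to be integrable with a controlled constant; in the power-law setting this requires $\xi'(x)=O(x^{-(p-1)\gamma})$ with a fixed constant. With the paper's choice one has $\lvert\xi'(x)\rvert \le 2\lvert\Lambda\rvert x^{-(p-1)\gamma}$ unconditionally, so the hypotheses of the iterative scheme hold automatically and uniformly in the initial condition $\psi(x_0)$ — this uniformity is precisely what makes~\eqref{6.17} valid and what drives the topological argument. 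With your choice, $\lvert\xi'(x)\rvert \le \left(2\lvert\Lambda\rvert + \lvert\Psi(x)\rvert\right)x^{-(p-1)\gamma}$, so the very bound you need to justify the reduced ODE already presupposes that $\lvert\Psi\rvert$ is bounded, which is what the reduced ODE is supposed to yield. Saying the a priori bound ``will follow from the reduced scalar ODE'' is therefore circular as written. The hole is fillable — a continuation/bootstrap argument (assume $\lvert\Psi\rvert\le M$ on $[x_0,T)$, observe that the boundary terms in the Lemma~\ref{L4.1} telescoping are controlled by $\lvert\Gamma(M)\rvert$, which is independent of $\xi$, while the $L^1$ pieces are controlled by $\sup\lvert\xi'\rvert x^{(p-1)\gamma}\le 2\lvert\Lambda\rvert+M$; then choose $x_0$ large so that variation of parameters keeps $\lvert\Psi\rvert$ strictly below $M$) — but you need to actually carry this out before the rest of the argument is legitimate.
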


\begin{proof}
With $x_0$ to be specified later, pick $\xi(x)$ arbitrarily (e.g. constant) for $x<x_0$, and by the formula
\begin{equation}\label{6.16}
\frac d{dx} \xi(x) = - 2 \Re \left( \frac \Lambda {x^{(p-1)\gamma}} e^{i(\xi(x)+ 2\theta(x))} \right), \qquad x > x_0.
\end{equation}
Then $\xi'(x) \in O(x^{-(p-1)\gamma})$ is trivial.  By \eqref{6.14} and \eqref{6.16}, $\psi'(x) \sim 0$, so $\lim\limits_{x\to\infty} \psi(x)$ exists. The formula \eqref{6.16} determines $\xi(x)$ only up to a choice of initial condition $\xi(x_0)$. Alternatively, we can view this as a choice of initial condition  $\psi(x_0)$ for the function $\psi$. It remains to show that we can pick the value of the limit \eqref{6.15} by a suitable choice of $\xi(x_0)$.

The convergence of $\psi(x)$ followed, through \eqref{6.14}, from an iterative application of Lemma~\ref{L4.1}. Revisiting the proof of that lemma and assuming power law decay, we see that the same proof implies the following more quantitative version of the lemma: if $\Gamma(x)\in C^1$, $\lvert\frac d{dx} (e^{i\phi x}\Gamma(x)) \rvert \le C_1 x^{-p\gamma}$ and $\lim_{x\to\infty}\Gamma(x) = 0$, then
\[
\left\lvert \int_M^\infty \left( f(\eta)\Gamma(x) e^{ki[\eta x+2\theta(x)]} - g(\eta)\Gamma(x) e^{ki[\eta x+2\theta(x)]}   \frac {d\theta}{dx} \right) dx \right\rvert  \le 2 C_1 \lvert h(\eta)\rvert M^{1-p\gamma}
\]
Thus, under our current assumptions of power-law decay \eqref{1.8}, \eqref{6.9}, \eqref{6.16}, using this quantitative estimate for the rate of convergence,
\begin{equation}\label{6.17}
\lvert \psi(x) - \lim\limits_{x\to\infty} \psi(x) \rvert \le C x^{1-p\gamma},
\end{equation}
where $C$ depends only on $p$, the set of phases and $\Lambda$, but not on the choice of $\psi(x_0)$. Thus, convergence is uniform in different choices of this initial condition $\psi(x_0)$. Pick $x_0$ such that $\lvert \psi(x_0) - \lim\limits_{x\to\infty} \psi(x) \rvert < \pi$ for all initial values $\psi(x_0)$. Then, the map
 \[
 \exp(i\psi(x_0)) \mapsto \exp(i\lim\limits_{x\to\infty} \psi(x) )
 \]
  is a continuous (by uniform convergence) map from the unit circle to itself, which has no point $z$ which maps to its antipodal point $-z$. Thus, by standard topological considerations (see e.g.\ Hatcher~\cite[Section 2.2]{Hatcher02}), this map is homotopic to the identity map on the unit circle; further, since it isn't null-homotopic, it is onto (since the circle with one point removed has trivial fundamental group).
This implies that  \eqref{6.15} holds for some choice of $\psi(x_0)$ or, equivalently, $\xi(x_0)$.
\end{proof}

From now on, let us use the choice of $\xi(x)$ given by Lemma~\ref{L6.3}. From \eqref{6.17} and \eqref{6.14} it follows that
$
\frac {d\theta}{dx} \sim O(x^{-p\gamma}) \sim 0
$
so the limit
\begin{equation}
\theta_\infty = \lim\limits_{x\to\infty} \theta(x)
\end{equation}
exists. Similarly, \eqref{6.16} implies that $\xi(x)$ has bounded variation.

By \eqref{6.6}, \eqref{6.15} and \eqref{6.17}, we have
\begin{align*}
\frac d{dx} \log R(x) & \sim \Im \left( \frac \Lambda {x^{(p-1)\gamma}} e^{i\psi_\infty} + \frac \Lambda {x^{(p-1)\gamma}} (e^{i\psi(x)} - e^{i\psi_\infty} ) \right) \\
& \sim \Im \left( - i \frac{\lvert\Lambda\rvert}{x^{(p-1)\gamma}}  + O(x^{-p\gamma}) \right) \\
&  \sim - \frac{\lvert\Lambda\rvert}{x^{(p-1)\gamma}}
\end{align*}
Thus, Lemma~\ref{L6.1} is applicable and the asymptotics \eqref{6.2} hold, with $B=\lvert \Lambda\rvert$. This concludes the proof of Theorem~\ref{T1.2}.
\end{proof}

We only manipulated $\beta_0(x)$ in order to destroy $\Omega(x)$ from \eqref{6.10}, \eqref{6.11} (i.e.\ to make it $L^1$). There are other ways to do so, which do not involve $\beta_0(x)$. We illustrate this with the proof of Theorem~\ref{T1.3}.

\begin{proof}[Proof of Theorem~\ref{T1.3}] Since we are assuming $\beta_0(x)=0$, \eqref{6.11} becomes
\begin{equation*}
\Omega(x) = \frac 1{\eta} \sum_{k=1}^K \frac{\lambda_k^2}{\eta^2 - \alpha_k^2}\, \frac 1{x^{2\gamma}}.
\end{equation*}
Thus, if
\begin{equation*}
\min\{\alpha_1,\dots,\alpha_K\} < \eta <  \max\{\alpha_1,\dots,\alpha_K\},
\end{equation*}
we can choose $\lambda_1, \dots, \lambda_K>0$ so that
\[
\sum_{k=1}^K \frac{\lambda_k^2}{\eta^2 - \alpha_k^2} = 0
\]
and therefore $\Omega(x)=0$. The condition for this is a homogenous equation in $\lambda_1,\dots, \lambda_K$, so this choice of $\lambda_k$ does not hinder us in making a product of $\lambda$'s as large as wanted. With $\Omega(x)=0$, the remainder of the proof proceeds exactly as in the proof of Theorem~\ref{T1.2}.
\end{proof}

\bibliographystyle{amsplain}


\providecommand{\bysame}{\leavevmode\hbox to3em{\hrulefill}\thinspace}
\providecommand{\MR}{\relax\ifhmode\unskip\space\fi MR }
\providecommand{\MRhref}[2]{%
  \href{http://www.ams.org/mathscinet-getitem?mr=#1}{#2}
}
\providecommand{\href}[2]{#2}

\end{document}